\renewcommand{\theequation}{\thesection.Arabic{equation}}
\newtheorem{thm}{Theorem}[section]
\newtheorem{cor}[thm]{Corollary}
\newtheorem{lem}[thm]{Lemma}
\newtheorem{rem}[thm]{Remark}
\begin{document}
\newcommand{\BX}{{\bf X}}
\newcommand{\cv}{{\cal V}}
\newcommand{\cW}{{\cal W}}
\newcommand{\co}{{\cal O}}

\renewcommand{\theequation}{\thesection.\arabic{equation}}
\def\@eqnnum{{\reset@font\rm (\theequation)}}

\newtheorem{definition}{Definition}[section]

\def\abstract{
\advance \rightskip by 10mm
\advance \leftskip by 10mm
\vspace{-0.8em}
\noindent
\small{\bf Abstract.}
}
\def\endabstract{\par\normalsize\rm}

\def\Xint#1{\mathchoice
{\XXint\displaystyle\textstyle{#1}}%
{\XXint\textstyle\scriptstyle{#1}}%
{\XXint\scriptstyle\scriptscriptstyle{#1}}%
{\XXint\scriptscriptstyle\scriptscriptstyle{#1}}%
\!\int}
\def\XXint#1#2#3{{\setbox0=\hbox{$#1{#2#3}{\int}$}
\vcenter{\hbox{$#2#3$}}\kern-.5\wd0}}
\def\ddashint{\Xint=}
\def\dashint{\Xint-}

\def\a{\alpha}
\def\b{\beta}
\def\d{\delta}\def\D{\Delta}
\def\e{\epsilon}
\def\g{\gamma}\def\G{\Gamma}
\def\k{\kappa}
\def\lam{\lambda}\def\Lam{\Lambda}
\renewcommand\o{\omega}\renewcommand\O{\Omega}
\def\s{\sigma}\def\S{\Sigma}
\renewcommand\t{\theta}\def\vt{\vartheta}
\newcommand{\vphi}{\varphi}
\def\z{\zeta}

\newcommand{\tsigma}{\tilde{\s}}
\newcommand{\tbsigma}{\tilde{\bsigma}}
\def\te{\tilde{\e}}
\def\tu{\tilde{u}}

\newcommand{\bchi}{\mbox{\boldmath$\chi$}}
\newcommand{\bdelta}{\mbox{\boldmath$\delta$}}
\newcommand{\bepsilon}{\mbox{\boldmath$\epsilon$}}
\newcommand{\bfeta}{\mbox{\boldmath$\eta$}}
\newcommand{\bgamma}{\mbox{\boldmath$\gamma$}}
\newcommand{\bomega}{\mbox{\boldmath$\omega$}}
\newcommand{\bvphi}{\mbox{\boldmath$\varphi$}}
\newcommand{\bphi}{\mbox{\boldmath$\phi$}}
\newcommand{\bPhi}{\mbox{\boldmath$\Phi$}}
\newcommand{\bpsi}{\mbox{\boldmath$\psi$}}
\newcommand{\bPsi}{\mbox{\boldmath$\Psi$}}
\newcommand{\bsigma}{\mbox{\boldmath$\sigma$}}
\newcommand{\btau}{\mbox{\boldmath$\tau$}}
\newcommand{\bxi}{\mbox{\boldmath$\xi$}}
\newcommand{\brho}{\mbox{\boldmath$\rho$}}
\newcommand{\bbeta}{\mbox{\boldmath$\beta$}}
\newcommand{\bzeta}{\mbox{\boldmath$\zeta$}}

\def\bk{\boldsymbol{\kappa}}
\def\bmu{\boldsymbol\mu}
\def\bxi{\boldsymbol{\xi}}
\def\bz{\boldsymbol{\zeta}}

\def\ba{{\bf a}}
\def\bb{{\bf b}}
\def\bc{{\bf c}}
\def\be{{\bf e}}
\def\bff{{\bf f}}
\def\bg{{\bf g}}
\def\bn{{\bf n}}
\def\bp{{\bf p}}
\def\bq{{\bf q}}
\def\bs{{\bf s}}
\def\bt{{\bf t}}
\def\bu{{\bf u}}
\def\bv{{\bf v}}
\def\bw{{\bf w}}
\def\bx{{\bf x}}
\def\by{{\bf y}}
\def\bzz{{\bf z}}

\def\bD{{\bf D}}
\def\bE{{\bf E}}
\def\bF{{\bf F}}
\def\bH{{\bf H}}
\def\bJ{{\bf J}}
\def\bV{{\bf V}}
\def\bU{{\bf U}}
\def\bW{{\bf W}}
\def\bX{{\bf X}}
\def\bY{{\bf Y}}

\def\cA{{\cal A}}
\def\cC{{\cal C}}
\def\cD{{\cal D}}
\def\cE{{\cal E}}
\def\cF{{\cal F}}
\def\cG{{\cal G}}
\def\cI{{\cal I}}
\def\cJ{{\cal J}}
\def\cK{{\cal K}}
\def\cL{{\cal L}}
\def\cO{{\cal O}}
\def\cP{{\cal P}}
\def\cQ{{\cal Q}}
\def\cR{{\cal R}}
\def\cS{{\cal \Sigma}}
\def\cT{{\cal T}}
\def\cU{{\cal U}}
\def\cV{{\cal V}}

\def\scT{{_\cT}}
\def\sD{{_D}}
\def\sE{{_E}}
\def\sF{{_F}}
\def\sFz{{_{F_z}}}
\def\sK{{_K}}
\def\sI{{_I}}
\def\sb{{_b}}
\def\sN{{_N}}

\def\curl{{\mbox{curl}\ }}
\def\rot{{\mbox{rot}\ }}
\def\BPI{{\bf \Pi}}

\def\bigintF{{\displaystyle \int_F}}
\def\bigintK{{\displaystyle \int_K}}

\def\cth{\cT_h}
\def\ctH{\cT_H}

\def\tJ{\tilde{\J}}

\def\hK{\widehat{K}}
\def\hx{\widehat{x}}
\def\hy{\widehat{y}}
\def\bhv{\widehat{\bv}}

\def\l{\ell}
\def\bl{\boldsymbol{\ell}}
\def\col{\colon}
\def\f12{\frac12}
\def\dfrac{\displaystyle\frac}
\def\dint{\displaystyle\int}
\def\nab{\nabla}
\def\p{\partial}
\def\sm{\setminus}
\def\dsum{\displaystyle\sum}
\newcommand{\pp}[2]{\frac{\partial {#1}}{\partial {#2}}}
\def\bzero{{\bf 0}}

\def\divv{\nab\cdot}
\def\divx{\nab_x\cdot}
\def\divtx{\nab_{t,x}\cdot}
\def\nabx{\nab_x}

\newcommand{\curlt}{{\nabla \times}}
\newcommand{\gperp}{\nabla^{\perp}}
\newcommand{\gradt}{\nabla\cdot}

\def\forallqq{\quad\forall\,}
\def\aph{A^{1/2}}
\def\amh{A^{-1/2}}

\def\osc{{\rm osc \, }}

\def\Im{{\rm Im}}
\newcommand{\tr}{{\rm tr}}
\def\divvr{{\rm div}}
\def\curllr{{\rm curl}}
\def\curll{{\rm curl}}
\newcommand{\bgrad}{{\bf grad}}
\newcommand\diam{\mathrm{diam\,}}
\renewcommand\Im{\mathrm{Im\,}}
\def\Span{\mbox{Span}}
\def\supp{\mbox{supp\,}}
\newcommand{\trace}{{\rm trace}}

\newcommand{\tri}{|\!|\!|}
\newcommand{\ljump}{\lbrack\!\lbrack}
\newcommand{\rjump}{\rbrack\!\rbrack}
\newcommand{\bdm}{\begin{displaymath}}
\newcommand{\edm}{\end{displaymath}}
\newcommand{\beq}{\begin{equation}}
\newcommand{\eeq}{\end{equation}}
\newcommand{\beqa}{\begin{eqnarray}}
\newcommand{\eeqa}{\end{eqnarray}}
\newcommand{\beqas}{\begin{eqnarray*}}
\newcommand{\eeqas}{\end{eqnarray*}}
\newcommand{\ul}{\underline}
\newcommand{\wh}{\widehat}
\newcommand{\la}{\langle}
\newcommand{\ra}{\rangle}

\newcommand{\Lt}{L^2(\Omega)}
\newcommand{\Lts}{L^2(\Omega)^2}
\newcommand{\Ltc}{L^2(\Omega)^3}
\newcommand{\Ho}{H^1(\Omega)}
\newcommand{\Hoh}{H^1(\wh{\Omega})}
\newcommand{\Hoi}{H^1(\Omega_i)}
\newcommand{\Hos}{H^1(\Omega)^2}
\newcommand{\Hoc}{H^1(\Omega)^3}
\newcommand{\Hoch}{H^1(\wh{\Omega})^3}
\newcommand{\Hoci}{H^1(\Omega_i)^3}
\newcommand{\Hoz}{H^1_0(\Omega)}
\newcommand{\Ht}{H^2(\Omega)}
\newcommand{\Hti}{H^2(\Omega_i)}
\newcommand{\Hts}{H^2(\Omega)^2}
\newcommand{\Htc}{H^2(\Omega)^3}
\newcommand{\Htz}{H^0(\Omega)}
\newcommand{\Hh}{H^{1/2}(\Gamma)}
\newcommand{\Hhi}{H^{1/2}(\Gamma_i)}
\newcommand{\Hmh}{H^{-1/2}(\Gamma)}
\newcommand{\Hdiv}{H(\divvr;\,\Omega)}
\newcommand{\Hdivh}{H(\divv;\,\wh \Omega)}
\newcommand{\hcurl}{H(\curl\,A;\,\Omega)}
\newcommand{\Hcurl}{H(\curll\,A;\,\Omega)}
\newcommand{\Hcrl}{H(\curll\,;\,\Omega)}
\newcommand{\hcrl}{H(\curl\,;\,\Omega)}
\newcommand{\Hcrlh}{H(\curll\,;\,\wh\Omega)}
\newcommand{\hcrlh}{H(\curl\,;\,\wh\Omega)}
\newcommand{\Wdiv}{\BW_0(\mbox{\divv}\,;\,\Omega)}
\newcommand{\Wcurl}{\BW_0(\mbox{\curl}\,A;\,\Omega)}
\newcommand{\WcrossV}{\BW \times V}

\def\calS{{\cal S}}
\def\calT{{\cal T}}
\def\cB{{\cal B}}
\def\cH{{\cal H}}
\def\ba{{\mathbf{a}}}
\def\cN{{\cal N}}
%

\newcommand{\argmin}{\mbox{argmin}}

\def\bE{{\bf E}}
\def\bS{{\bf S}}
\def\br{{\bf r}}
\def\bW{{\bf W}}
\def\bphi{{\bf \phi}}

\newcommand{\lJump}{[\![}
\newcommand{\rJump}{]\!]}
\newcommand{\jump}[1]{[\![ #1]\!]}

\newcommand{\dd}{\underline{{\mathbf d}}}
\newcommand{\C}{\rm I\kern-.5emC}
\newcommand{\R}{\rm I\kern-.19emR}
\newcommand{\W}{{\mathbf W}}
\def\3bar{{|\hspace{-.02in}|\hspace{-.02in}|}}

\title {Generalized Prager-Synge Inequality and \\ 
Equilibrated Error Estimators for Discontinuous Elements}
\author{
Zhiqiang Cai\thanks{
Department of Mathematics, Purdue University, 150 N. University
Street, West Lafayette, IN 47907-2067, \{caiz\}@purdue.edu.
This work was supported in part by the National Science Foundation
under grant DMS-1522707.}
\and Cuiyu He\thanks{Department of Mathematics, 
University of Georgia,  1023 D.W. Brooks Dr, Athens, GA, 30605, USA,
cuyu.he@uga.edu.}
 \and Shun Zhang\thanks{Department of Mathematics, 
City University of Hong Kong, Hong Kong SAR, China,
shun.zhang@cityu.edu.hk.
This work was supported in part
by Hong Kong Research Grants Council under 
the GRF Grant Project No. 11305319, CityU 9042864.}
}
 \date{\today}
 \maketitle

\begin{abstract}
The well-known Prager-Synge identity is valid in $H^1(\Omega)$ and 
serves as a foundation for developing equilibrated a posteriori error estimators for continuous elements. 
In this paper, we introduce a new inequality, that may be regarded as a generalization of the Prager-Synge identity,
to be valid for {\it piecewise} $H^1(\Omega)$ functions for diffusion problems. 
The inequality is proved to be identity in two dimensions. 

For nonconforming finite element approximation of arbitrary odd order, 
we propose a fully explicit approach that recovers an equilibrated flux in $H(\divvr; \O)$
through a local element-wise scheme and that recovers a gradient in $H(\curll;\Omega)$
through a simple averaging technique over edges. 
The resulting error estimator is then proved to be globally reliable and locally efficient.
Moreover, the reliability and efficiency constants are independent of the jump of the diffusion coefficient  
regardless of its distribution.
\end{abstract}

\section{Introduction}\label{intro}
\setcounter{equation}{0}
Equilibrated a posteriori error estimators have attracted much interest recently due to the guaranteed reliability
bound with the reliability constant being one. This property implies that they are perfect for discretization
error control on both coarse and fine meshes. Error control on coarse meshes is important but difficult for computationally challenging problems.

For the conforming finite element approximation, a mathematical foundation of equilibrated estimators is
the Prager-Synge identity \cite{PrSy:47} that is valid in $H^1(\Omega)$ (see Section 3).
Based on this identity, 
various equilibrated estimators have been studied recently by many researchers 
(see, e.g.,  \cite{LaLe:83, DeSw:87, OdDeRaWe:89,
DeMe:98, DeMe:99, AiOd:00, AiOd:93, Vej:06, Br:07, BrSc:08, BrPiSc:09, Ve:09, CaZh:11, CaCaZh:20}). 
The key ingredient of the equilibrated estimators for the continuous elements is 
local recovery of an equilibrated (locally conservative) flux in the $H(\divvr;\O)$ space through the numerical flux.
By using a partition of unity, Ladev\`eze and Leguillon \cite{LaLe:83} initiated a local procedure 
to reduce the construction of an equilibrated flux to vertex patch based local calculations. 
For the continuous linear finite element approximation to the Poisson equation in two dimensions,
an equilibrated flux in the lowest order Raviart-Thomas space was explicitly constructed in \cite{Br:07, BrSc:08}.
This explicit approach does not lead to robust equilibrated estimator with respect to the coefficient jump
without introducing a constraint minimization (see \cite{CaZh:11}). The constraint minimization on each vertex 
patch may be efficiently solved by first computing an equilibrated flux and then calculating a divergence free
correction. For recent developments, see \cite{CaCaZh:20} and references therein. 
 

The purpose of this paper is to develop and analyze equilibrated a posteriori error estimators for discontinuous elements 
including both nonconforming  and discontinuous Galerkin elements. To do so, the first and the essential step
is to extend the Prager-Synge identity to be valid for piecewise $H^1(\Omega)$ functions. This will be done by establishing a 
generalized Prager-Synge inequality (see Theorem~3.1) that contains an additional term measuring the distance
between $H^1(\Omega)$ and piecewise $H^1(\Omega)$. 
Moreover, by using a Helmholtz decomposition, we will be able 
to show that the inequality becomes an identity in two dimensions (see Lemma~3.4).
A non-optimal inequality similar to ours was obtained earlier by
Braess, Fraunholz, and Hoppe in \cite{BFH:14} for the Poisson equation with pure Dirichlet boundary condition.
Based on the generalized Prager-Synge inequality and an equivalent form (see Corollary~3.2), 
the construction of an equilibrated a posteriori
error estimator for discontinuous finite element solutions is reduced to recover an equilibrated flux
in $H(\mbox{div};\Omega)$ and to recover 
either a potential function in $H^1(\Omega)$ or a curl free vector-valued
function in $H(\curll;\Omega)$.  

Recovery of equilibrated fluxes for discontinuous elements has been studied by many researchers. 
For discontinuous Garlerkin (DG) methods, equilibrated fluxes in Raviart-Thomas (RT) spaces were explicitly reconstructed
in \cite{Ai:07b} for linear elements and in \cite{ern2007accurate} for higher order elements. 
For nonconforming finite element methods, existing {\it explicit} equilibrated flux recoveries in RT spaces 
seem to be limited to the linear Crouzeix-Raviart (CR) and the quadratic Fortin-Soulie elements by Marini \cite{Ma:85}
(see \cite{Ai:05} in the context of estimator)
and Kim \cite{Ki:12}, respectively. 
For higher order nonconforming elements, a local reconstruction procedure was proposed by
Ainsworth and Rankin in \cite{Ai:08} through solving element-wise minimization problems. The
recovered flux is not in the RT spaces. Nevertheless,
the resulting estimator provides a guaranteed upper bound. 

In this paper, we will introduce a fully explicit post-processing procedure for recovering an equilibrated flux 
in the RT space of index $k-1$
for the nonconforming elements of odd order of $k\ge 1$. Currently, we are not able to extend our recovery 
technique to even orders. This is because our recovery procedure 
heavily depends on the finite element formulation and the properties of the nonconforming finite element space;
moreover, structure of the nonconforming finite element spaces of even and odd orders are fundamentally different.

Recovery of a potential function in $H^1(\Omega)$ for discontinuous elements was studied by some researchers
(see, e.g.,  \cite{Ai:08, Ai:07b, BFH:14}). Local approaches for recovering equilibrated flux in \cite{Br:07, BrSc:08, CaZh:11, BrPiSc:09, CaCaZh:20} may be directly applied (at least in two dimensions) for computing an approximation to the gradient in the curl-free space. 
(As mentioned previously, this approach requires solutions of local constraint minimization problems over vertex patches.) The resulting a posteriori error estimator from either the potential or the gradient
recoveries may be proved to be locally efficient. Nevertheless, to show independence of 
the efficiency constant on the jump, we have to assume that the distribution of the diffusion coefficient is 
quasi-monotone (see \cite{Pet:02}).

In this paper, we will employ a simple averaging technique over edges to 
recover a gradient in $H(\curll;\O)$. Due to the fact that the recovered gradient is 
not necessarily curl free, the reliability constant of the resulting estimator is no longer one. 
However, it turns out that the curl free constraint is not essential and, theoretically we are able to prove
that the resulting estimator has the robust local reliability as well as the robust local efficiency without the quad-monotone assumption. 
This is compatible with our recent result in 
\cite{CaHeZh:17} on the residual error estimator for discontinuous elements.

This paper is organized as follows. The diffusion problem and the finite element mesh are introduced in Section~2. 
The generalized Prager-Synge inequality for piecewise $H^1(\Omega)$ functions are established in Section~3. 
Explicit recoveries of an equilibrated flux and a gradient 
and the resulting a posteriori error estimator for discontinuous elements are described in Section~4.
Global reliability and local efficiency of the estimator are proved in Section~5. Finally, 
numerical results are presented in Section~6.

\section{Model problem}
\setcounter{equation}{0}

Let $\O$ be a bounded polygonal domain in $\mathbb{R}^d, d=2,3$, with Lipschitz 
boundary $\partial \O = \overline\Gamma_D \cup \overline \Gamma_N$, where
$ \overline \Gamma_D \cap \overline \Gamma_N = \emptyset$.
For simplicity, assume that
$\mbox{meas}_{d-1}(\Gamma_D) \neq 0$.
Considering the diffusion problem:
\begin{equation}\label{pde}
	-\gradt (A \nabla u)  =  f   \quad\mbox{in} \quad  \O,
\end{equation} 
with boundary conditions 
\[
	u = 0 \; \mbox{ on }  \Gamma_D \quad \mbox{and} \quad
	-A \nabla u \cdot \bn=g \;\mbox{ on } 
	\Gamma_N,
\]
where $\nabla \cdot$ and $\nabla$ are the respective divergence and gradient operators; $\bn$ is the outward unit 
vector normal to the boundary; $f \in L^2(\O)$ and $g\in H^{-1/2}(\Gamma_N)$ are given scalar-valued functions; and the diffusion coefficient $A(x)$ is symmetric, positive definite, and piecewise constant full tensor with respect to the domain
$\overline{\Omega} = \cup_{i=1}^n \overline{ \Omega}_i$.
Here we assume that the subdomain, $\Omega_i$ for $i=1, \cdots, n$, is open and polygonal.

 We use the standard notations and definitions for the Sobolev spaces. Let
\[
	H_D^1(\O) =\left\{v \in H^1(\O) \,:\,
	v=0 \mbox{ on } \Gamma_D \right\}.
\]
Then the corresponding variational problem of (\ref{pde}) is to  find $u \in H^1_D(\O)$ such that 
 \beq \label{vp}
	a(u,\,v):= (A\nabla u, \nabla v) = (f, v)- \left<g, v\right>_{\Gamma_N},
	\quad \forall  \;v\in H_D^1(\O),
 \eeq
where $(\cdot, \cdot)_{\omega}$ is the $L^2$ inner product on the domain $\o$. 
The subscript $\omega$ is omitted when $\o=\O$. 

\subsection{Triangulation}

 Let $\cT=\{K\}$ be a finite element partition of $\O$ that is regular, and denote 
 by $h_K$ the diameter of the element $K$. Furthermore, assume that the interfaces,
 \[
 	\Gamma = \{ \partial \O_i \cap \partial \O_j : i \neq j \mbox{ and }  i, j = 1,\cdots, n\},
 \]
 do not cut through any element $K \in \cT$.
 Denote the set of all edges of the triangulation $\cT$ by
  \[
 	\cE := \cE_I \cup \cE_D \cup \cE_N,
 \]
 where $\cE_I$ is the set of interior element edges, and $\cE_D$ and $\cE_N$ are the sets of 
boundary edges belonging to the respective $\Gamma_D$ and $\Gamma_N$.
  For each $F \in \cE$, denote by $h_F$ the length of $F$ and by
 $\bn_F$ a unit vector normal to $F$.
 Let $K_F^+$ and $K_F^-$ be the two elements sharing the common edge $F \in \cE_I$ 
 such that the unit outward normal of $K_F^-$ coincides with $\bn_F$. When $F \in \cE_D \cup \cE_N $,
 $\bn_F$ is the unit outward normal to $\partial \O$ and denote by $K_F^-$ the element having the edge $F$.
 
 
 \section{Generalized Prager-Synge inequality}\label{sec:3}
\setcounter{equation}{0}

For the conforming finite element approximation, the foundation of the equilibrated a posteriori error estimator is
the Prager-Synge identity \cite{PrSy:47}. That is, let $u\in H^1_D(\Omega)$ be the solution of (\ref{pde}), then
 \[
	 \|A^{1/2}\nabla\, (u-w)\|^2 
	 + \| A^{-1/2}\btau + A^{1/2}\nabla\, u\|^2 
	 = \| A^{-1/2}\btau + A^{1/2}\nabla\, w\|^2
 \]
for all $w\in H^1_D(\Omega)$ and for all $\btau \in \S_f(\O)$, where $\S_f(\O)$ is the so-called equilibrated flux space
defined by
 \[
\S_f(\O) = \Big\{ \btau \in H(\divvr;\O) : \gradt \btau =f  \mbox{ in } \O \;\mbox{ and }
  \; \btau \cdot \bn = g_{_{N}}  \Big\}.
  \]
Here, $H(\divvr;\O)\subset L^2(\O)^d$ denotes the space of all vector-valued functions whose divergence are in $L^2(\O)$.
The Prager-Synge identity immediately leads to
 \beq\label{P-S}
  \|A^{1/2}\nabla\, (u-w)\|^2 
   \leq   \inf_{\btau\in \S_f(\O)} \| A^{-1/2}\btau + A^{1/2}\nabla\, w\|^2.
 \eeq
Choosing $w\in H^1_D(\Omega)$ to be the conforming finite element approximation, then (\ref{P-S}) implies that
 \beq\label{eta_tau}
 \eta_{\tau}:=\| A^{-1/2}\btau + A^{1/2}\nabla\, w\|,\quad \forall\,\, \btau\in \S_f(\O)
 \eeq
is a reliable estimator with the reliability constant being one. 

We now proceed to establish a generalization of  (\ref{P-S}) for piecewise $H^1(\O)$ functions with applications to nonconforming and  discontinuous Galerkin finite element approximations. To this end, denote 
the broken $H^1(\O)$ space with respect to $\cT$ by
\[
	H^1(\cT)= \left\{ v \in L^2(\O) \,:\,
	v|_K \in H^1(K), \quad \forall \,K \in \cT
	  \right\}.
\]
Define  $\nabla_h$ be the discrete gradient operator on $H^1(\cT)$
such that for any $v \in H^1(\cT)$
\[
(\nabla_h v)|_K = \nabla (v|_K), \quad \forall K \in \cT.
\]

 \begin{thm}\label{Reliability}
Let $u\in H^1_{D}(\O)$ be the solution of  {\em (\ref{pde})}. In both two and three dimensions,
 for all $w \in H^1(\cT)$, we have
 \beq \label{implicit-reliability}
	 \|A^{1/2}\nabla_h(u-w)\|^2 \leq 
	 \inf_{\btau\in \S_f(\O)}\| A^{-1/2}\btau+
	 A^{1/2}\nabla_h w\|^2 + \inf_{v\in H^1_{D}(\O)}\|A^{1/2}\nabla_h(v-w)\|^2.
 \eeq
 \end{thm}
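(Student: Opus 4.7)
The plan is to reduce the bound to the classical Prager-Synge orthogonality by inserting an arbitrary conforming companion $v\in H^1_D(\O)$ between $u$ and $w$. Since $v$ is conforming, $\nabla_h v = \nabla v$, so one may split
$$
A^{1/2}\nabla_h(u-w) \;=\; A^{1/2}\nabla(u-v) \;+\; A^{1/2}\nabla_h(v-w).
$$
The first summand is a genuine $H^1$ gradient, where the classical Prager-Synge mechanism applies directly; the second is the purely non-conforming residual and will ultimately be absorbed into the second infimum on the right-hand side of (\ref{implicit-reliability}).

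Next, fix $\btau\in\calS_f(\O)$. The key ingredient is the orthogonality
$$
\bigl(A\nabla u + \btau,\; \nabla(u-v)\bigr) \;=\; 0 \qquad \forall\, v\in H^1_D(\O),
$$
which underlies the classical Prager-Synge identity. It follows by integrating $(\btau,\nabla(u-v))$ by parts --- using $\gradt\btau = f$ in $\O$, $\btau\cdot\bn = g_N$ on $\Gamma_N$, and $u-v=0$ on $\Gamma_D$ --- and matching the resulting source and boundary terms against the weak equation (\ref{vp}) tested with $u-v$. Combined with the splitting above, this orthogonality gives
$$
\bigl(A^{1/2}\nabla u + A^{-1/2}\btau,\; A^{1/2}\nabla_h(u-w)\bigr) \;=\; \bigl(A^{1/2}\nabla u + A^{-1/2}\btau,\; A^{1/2}\nabla_h(v-w)\bigr),
$$
since the difference of the two sides collapses to $(A\nabla u + \btau,\nabla(u-v))=0$.

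To finish, I would rewrite the equilibrated-flux residual as
$$
A^{-1/2}\btau + A^{1/2}\nabla_h w \;=\; \bigl(A^{1/2}\nabla u + A^{-1/2}\btau\bigr) - A^{1/2}\nabla_h(u-w),
$$
expand the square, and substitute the cross-term identity above. The algebra produces the completed-square surplus
$$
\|A^{-1/2}\btau + A^{1/2}\nabla_h w\|^2 + \|A^{1/2}\nabla_h(v-w)\|^2 - \|A^{1/2}\nabla_h(u-w)\|^2 \;=\; \bigl\|A^{1/2}\nabla u + A^{-1/2}\btau - A^{1/2}\nabla_h(v-w)\bigr\|^2 \;\ge\; 0,
$$
and taking the infimum over $\btau\in\calS_f(\O)$ and $v\in H^1_D(\O)$ yields (\ref{implicit-reliability}). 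I do not anticipate any serious analytic obstacle: the argument is the usual Pythagoras of Prager-Synge, and the non-conforming slack $\nabla_h(v-w)$ merely contributes a perfect square in place of a mixed cross term. The only care needed is bookkeeping in the orthogonality step, where the $\Gamma_D$ contribution vanishes because $u-v\in H^1_D(\O)$ and the $\Gamma_N$ contribution cancels because $\btau\cdot\bn = g_N = -A\nabla u\cdot\bn$.
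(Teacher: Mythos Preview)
Your proof is correct and follows essentially the same route as the paper's: both rely on the Prager--Synge orthogonality $(A\nabla u+\btau,\nabla(u-v))=0$ and the expansion $\|A^{-1/2}\btau+A^{1/2}\nabla_h w\|^2=\|A^{1/2}\nabla_h(u-w)\|^2+\|A^{1/2}\nabla u+A^{-1/2}\btau\|^2-2(\nabla_h(u-w),A\nabla u+\btau)$. The only cosmetic difference is that the paper bounds the cross term $2(\nabla_h(v-w),A\nabla u+\btau)$ by Young's inequality, whereas you complete the square and exhibit the slack explicitly as $\|A^{1/2}\nabla u+A^{-1/2}\btau-A^{1/2}\nabla_h(v-w)\|^2$; since Young's inequality $2(b,c)\le\|b\|^2+\|c\|^2$ is precisely $\|b-c\|^2\ge0$, the two are the same computation.
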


\begin{proof}
Let $w \in H^1(\cT)$, 
for all $\btau \in \S_f(\O)$ and for all $v\in H_{D}^1(\O)$, 
it follows from integration by parts and the Cauchy-Schwarz and Young's inequalities that
 \begin{eqnarray}\nonumber
  2\,(\nabla_h(u-w), A\nabla u+\btau)
  &=& 2\,(\nabla (u-v), A\nabla u+\btau) +2\,(\nabla_h(v-w), A\nabla u+\btau) \\[2mm] \nonumber
  &=& 2\, (\nabla_h(v-w), A\nabla u+\btau) \\[2mm] \label{ImRe:1}
  &\leq & \|A^{1/2}\nabla_h (v-w)\|^2 + \|A^{1/2}\nabla u + A^{-1/2} \btau\|^2 .
  \end{eqnarray}
It is easy to see that
 \[
 \|A^{1/2}\nabla_h w + A^{-1/2} \btau\|^2 
 = \|A^{1/2}\nabla_h (u-w)\|^2 +   \|A^{1/2}\nabla u 
 + A^{-1/2} \btau\|^2-2(\nabla_h(u-w), A\nabla u+\btau),
 \]
which, together with (\ref{ImRe:1}), implies
\begin{eqnarray*}
&& \|A^{1/2}\nabla_h (u-w)\|^2   \\[2mm]
 &=&\|A^{1/2}\nabla_h w + A^{-1/2} \btau\|^2-\|A^{1/2}\nabla u + A^{-1/2} \btau\|^2 
 + 2(\nabla_h(u-w), A\nabla u+\btau) \\[2mm]
 &\leq & \|A^{1/2}\nabla_h w + A^{-1/2} \btau\|^2+ \|A^{1/2}\nabla_h (v-w)\|^2
\end{eqnarray*}
for all  $\btau \in \S_f(\O)$ and all $v\in H_{D}^1(\O)$. This implies the validity of (\ref{implicit-reliability}) 
and, hence, the theorem.
\end{proof}

A suboptimal result for the Poisson equation ($A=I$) with pure Dirichlet boundary condition
is proved in \cite{BFH:14} by Braess, Fraunholz, and Hoppe:
\[
 \|\nabla_h(u-w)\| \leq  \inf_{\btau\in \S_f(\O)} \|\nabla w+ \btau\| + 2 \inf_{v\in H^1_0(\O)}\|\nabla_h(v-w)\|.
 \]

Let $H(\curll; \O )\subset L^2(\O)^d$ be the space of all vector-valued functions whose curl are in $L^2(\O)$, and denote its
curl free subspace by 
\[ 
	\mathring{H}_D(\curll; \O) =
	\left\{ \btau \in H(\curll; \O ): \curlt \btau = 0 \mbox{ in } \O \mbox{ and } \btau \cdot \bt = 0 \mbox{ on } \Gamma_D
	\right\},
	\]
where $\bt$ denotes  the tangent vector(s).

\begin{cor}\label{Reliability2}Let $u\in H^1_{D}(\O)$ be the solution of  {\em (\ref{pde})}. In both two and three dimensions,
for all $w \in H^1(\cT)$, we have
 \beq \label{implicit-reliability2}
	 \|A^{1/2}\nabla_h(u-w)\|^2 \leq 
	 \inf_{\btau\in \S_f(\O)}\| A^{-1/2}\btau+
	 A^{1/2}\nabla_h w\|^2 + \inf_{ \bgamma \in \mathring{H}_D(\curll; \O)} \|A^{1/2} (\bgamma - \nabla_h w)\|^2.
 \eeq
 \end{cor}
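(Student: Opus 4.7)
The corollary reduces to Theorem~\ref{Reliability} once the second infimum on its right-hand side is rewritten over $\mathring{H}_D(\curll;\O)$ rather than over $H^1_D(\O)$. The only nontrivial point is the identification of sets
\[
\{\nabla v : v \in H^1_D(\O)\} \;=\; \mathring{H}_D(\curll;\O),
\]
which makes the two corresponding infima equal.

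The inclusion $\subseteq$ is immediate: for $v\in H^1_D(\O)$ the gradient $\nabla v$ is curl-free, and the vanishing of $v$ on $\Gamma_D$ kills its tangential derivative there, so $\nabla v\cdot\bt=0$ on $\Gamma_D$. The reverse inclusion is where the work lies: given $\bgamma\in\mathring{H}_D(\curll;\O)$, the Poincar\'e lemma (under the standard topological hypotheses implicit in this setting, namely $\O$ simply connected and $\Gamma_D$ connected) produces a potential $v\in H^1(\O)$ with $\nabla v=\bgamma$, unique up to an additive constant; the trace condition $\bgamma\cdot\bt=0$ on $\Gamma_D$ then forces $v$ to be constant there, and after subtracting that constant $v\in H^1_D(\O)$.

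With the identification in hand, and since $\nabla_h v=\nabla v$ whenever $v\in H^1(\O)\subset H^1(\cT)$, the two infima
\[
\inf_{v\in H^1_D(\O)}\|A^{1/2}\nabla_h(v-w)\|^2 \quad\text{and}\quad \inf_{\bgamma\in\mathring{H}_D(\curll;\O)}\|A^{1/2}(\bgamma-\nabla_h w)\|^2
\]
coincide, and substituting this equality into the right-hand side of Theorem~\ref{Reliability} yields (\ref{implicit-reliability2}).

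The main obstacle is the reverse inclusion, i.e., the Poincar\'e/de~Rham step that recovers the potential $v$ from the curl-free field $\bgamma$; in three dimensions this relies on $\O$ being simply connected, and in both dimensions on $\Gamma_D$ having only one connected component. If one prefers to avoid topological assumptions on $\O$, an alternative is to mimic the proof of Theorem~\ref{Reliability} directly with $\bgamma$ in place of $\nabla v$: split $\nabla_h(u-w)=(\nabla u-\bgamma)+(\bgamma-\nabla_h w)$, establish the orthogonality $(\nabla u-\bgamma,\, A\nabla u+\btau)=0$ via a Helmholtz decomposition of the divergence-free, normal-trace-vanishing field $A\nabla u+\btau$, and then apply Cauchy--Schwarz and Young's inequality exactly as in the earlier proof.
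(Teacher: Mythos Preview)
Your proposal is correct and follows exactly the paper's approach: the paper's proof consists of the single sentence that (\ref{implicit-reliability2}) is an immediate consequence of Theorem~\ref{Reliability} together with the fact that $\nabla H^1_D(\Omega) = \mathring{H}_D(\curll;\O)$. You supply more detail on this set identification (including the topological hypotheses needed for the reverse inclusion) than the paper does, but the argument is the same.
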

 
\begin{proof}
	The result of (\ref{implicit-reliability2}) is an immediate consequence of (\ref{implicit-reliability}) and the fact that
	$\nabla H^1_D(\Omega) = \mathring{H}_D(\curll; \O)$.
	\end{proof}

In the remaining section, we prove that, in two dimensions,  the inequality (\ref{implicit-reliability}) in Theorem \ref{Reliability} is indeed an equality.
For each $F \in \cE$, in two dimensions, assume that $\bn_F = (n_{_{1,F}}, n_{_{2,F}})$, then denote 
by $\bt_F = (-n_{_{2,F}}, n_{_{1,F}})$ the unit vector tangent to $F$ and by $\bs_\sF$ and $\be_\sF$ the start and end points of $F$, respectively, such that 
 $\be_\sF -\bs_\sF =h_F \bt_F$.
Let 
\[
	\cH = \left\{ v \in H^1(\O): \int_\O v \,dx = 0 \mbox{ and } \dfrac{\partial v}{\partial \bt} = 0
	 \mbox{ on } \Gamma_N \right\}.
\]
For a vector-valued function $\btau=(\tau_1, \tau_2) \in H(\curll; \O)$, define the curl operator by
\[
	\curlt \btau =\dfrac{\partial \tau_2}{\partial x} - \dfrac{\partial \tau_1}{\partial y}.
\]
For a scalar-valued function $v \in H^1(\O)$, define the formal adjoint operator of the curl by
\[
	\gperp v =\left(  \dfrac{\partial v}{ \partial y}, \,- \dfrac{\partial v}{ \partial x}\right) .
\]

For a fixed $w \in H^1(\cT)$, there exist unique $\phi \in H_D^1(\O)$ and $\psi \in \cH$ for the following 
Helmholtz decomposition (see, e.g.,  \cite{Ai:08}) such that
\beq \label{HM:0}
	A\nabla_h (u-w) = A \nabla \phi + \gperp \psi,
\eeq
and $\phi$ and $\psi$ satisfy 
\[
	(A \nabla \phi, \nabla v) = (A \nabla_h(u-w), \nabla v) \quad \forall v\in H_D^1(\O),
\]
and
\[
	(A^{-1} \gperp \psi, \gperp w) = (\nabla_h (u-w), \gperp w) \quad \forall w\in  \cH,
\]
respectively.
It is easy to see that 
$\nabla \phi$ and $\gperp \psi$ are orthogonal with respect to the $L^2$ inner product, which  yields
\beq\label{HM}
	\|A^{1/2} \nabla_h (u - w )\|^2 = 
	\|A^{1/2} \nabla \phi\|^2 + \|A^{-1/2} \gperp \psi\|^2.
\eeq

\begin{lem}\label{7.2}
Let $w$ be a fixed function in $H^1(\cT)$ and  $\phi$ and $\psi$ be the corresponding Helmholtz decomposition of $w$ given in {\em(\ref{HM:0})}.
	We have
	\beq \label{HM:1}
		  \inf_{\btau\in \S_f(\O)} \!\!\| A^{-1/2}\btau+ A^{1/2}\nabla_h w\|
		  =\|A^{1/2} \nabla \phi\|
		  \mbox{ and }
		  \inf_{v\in H^1_{D}(\O)} \!\! \|A^{1/2}\nabla_h(v-w)\| = \|A^{-1/2} \gperp \psi\|.
		  \eeq
\end{lem}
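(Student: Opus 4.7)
The idea is to exhibit explicit minimizers in each of the two infima and then use the two orthogonality relations that underlie the Helmholtz decomposition (\ref{HM:0}). For the second identity, I would take
\[
v^\ast := u - \phi \in H^1_D(\O),
\]
and for the first, I would take
\[
\btau^\ast := -A\nabla u + \gperp \psi.
\]
Everything else is verification.

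\textbf{Step 1: $v^\ast$ attains the second infimum.} Since $u \in H^1_D(\O)$, $\nabla_h u = \nabla u$, and the decomposition (\ref{HM:0}) rearranges to $A\nabla_h w = A\nabla(u-\phi) - \gperp\psi$. Hence $A\nabla_h(v^\ast - w) = \gperp \psi$, giving $\|A^{1/2}\nabla_h(v^\ast-w)\| = \|A^{-1/2}\gperp\psi\|$. For an arbitrary $v\in H^1_D(\O)$, write $v - w = (v - v^\ast) + (v^\ast - w)$ so that
\[
A^{1/2}\nabla_h(v-w) = A^{1/2}\nabla(v-v^\ast) + A^{-1/2}\gperp\psi.
\]
Expanding the square produces a cross term $(\nabla g, \gperp\psi)$ with $g = v-v^\ast \in H^1_D(\O)$. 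Integration by parts gives
\[
(\nabla g, \gperp\psi) = \int_{\partial\O} g\,\tfrac{\partial\psi}{\partial\bt}\,ds,
\]
which vanishes because $g=0$ on $\Gamma_D$ and $\partial\psi/\partial\bt = 0$ on $\Gamma_N$ (definition of $\cH$). Therefore $\|A^{1/2}\nabla_h(v-w)\|^2 = \|A^{1/2}\nabla(v-v^\ast)\|^2 + \|A^{-1/2}\gperp\psi\|^2 \geq \|A^{-1/2}\gperp\psi\|^2$, with equality at $v = v^\ast$.

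\textbf{Step 2: $\btau^\ast \in \S_f(\O)$ and attains the first infimum.} Since $\nabla\cdot\gperp\psi \equiv 0$, the divergence of $\btau^\ast$ equals $f$; and on $\Gamma_N$, $\btau^\ast\cdot\bn = g + \partial\psi/\partial\bt = g$, again by $\psi \in \cH$. A direct computation using (\ref{HM:0}) gives
\[
A^{-1/2}\btau^\ast + A^{1/2}\nabla_h w = -A^{1/2}\nabla_h(u-w) + A^{-1/2}\gperp\psi = -A^{1/2}\nabla\phi,
\]
so $\|A^{-1/2}\btau^\ast + A^{1/2}\nabla_h w\| = \|A^{1/2}\nabla\phi\|$. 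For an arbitrary $\btau\in\S_f(\O)$, set $\bsigma := \btau - \btau^\ast$; then $\nabla\cdot\bsigma = 0$ in $\O$ and $\bsigma\cdot\bn = 0$ on $\Gamma_N$. Expanding,
\[
\|A^{-1/2}\btau + A^{1/2}\nabla_h w\|^2 = \|A^{1/2}\nabla\phi\|^2 - 2(\nabla\phi,\bsigma) + \|A^{-1/2}\bsigma\|^2.
\]
Integration by parts yields $(\nabla\phi,\bsigma) = -(\phi,\nabla\cdot\bsigma) + \langle\phi,\bsigma\cdot\bn\rangle_{\partial\O} = 0$, since $\phi\in H^1_D(\O)$ vanishes on $\Gamma_D$ and $\bsigma\cdot\bn$ vanishes on $\Gamma_N$. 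Thus the squared norm equals $\|A^{1/2}\nabla\phi\|^2 + \|A^{-1/2}\bsigma\|^2 \geq \|A^{1/2}\nabla\phi\|^2$, with equality at $\btau = \btau^\ast$.

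\textbf{Remarks on difficulty.} Both identities reduce to recognising the correct minimizer and invoking the two $L^2$-orthogonalities $\nabla H^1_D(\O) \perp \gperp\cH$ and $\nabla H^1_D(\O) \perp \{\bsigma\in H(\divvr;\O): \nabla\cdot\bsigma = 0,\ \bsigma\cdot\bn|_{\Gamma_N}=0\}$. The only subtle point is checking the boundary terms in these integrations by parts, which is precisely where the constraint $\partial\psi/\partial\bt = 0$ on $\Gamma_N$ in the definition of $\cH$ and the zero-mean normalization (making $\psi$ well-defined) play their role. No other machinery is required.
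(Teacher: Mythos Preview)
Your proof is correct and takes essentially the same approach as the paper: both use the explicit minimizers $v^\ast=u-\phi$ and $\btau^\ast=-A\nabla u+\gperp\psi$, together with the same integration-by-parts orthogonalities. The only cosmetic difference is that the paper obtains the lower bound via Cauchy--Schwarz, whereas you expand the square and show the cross term vanishes; your Pythagorean version is in fact slightly more informative since it exhibits the exact gap $\|A^{-1/2}\bsigma\|^2$ (resp.\ $\|A^{1/2}\nabla(v-v^\ast)\|^2$).
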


\begin{proof}
	For any $\btau \in \S_f(\O)$, (\ref{HM:0}) and integration by parts give
	\[
	\|A^{1/2} \nabla \phi\|^2
	=(A \nabla_h (u - w), \nabla \phi) = 
	  (A \nabla u + \btau, \nabla \phi)-(\btau + A \nabla_h w, \nabla \phi)
	  =-(\btau + A \nabla_h w, \nabla \phi),
   \]
	which, together with the Cauchy-Schwarz inequality and the choice 
	$\btau = \gperp \psi -A \nabla u \in \S_f(\O)$, yields the first equality in (\ref{HM:1}) as follows:
	\[
	\|A^{1/2} \nabla \phi\| 
	\le  \inf_{\btau\in \S_f(\O)}\| A^{-1/2}\btau+ A^{1/2}\nabla_h w\| 
	 \leq \|A^{1/2}\nabla_h(u-w) - A^{-1/2}\gperp \psi\|
	 =\|A^{1/2} \nabla \phi\| .
	\]
	 	 
	Now we proceed to prove the second equality in (\ref{HM:1}). For any $v \in H_D^1(\O)$, by (\ref{HM:0}) and integration by parts, we have
	\[
	 	\|A^{-1/2} \gperp \psi\|^2= (\nabla_h(u-w) , \gperp \psi) = 
		(\nabla_h (v - w), \gperp \psi).
	 \]
The second equality in (\ref{HM:1}) is then a consequence of the Cauchy-Schwartz inequality 
and the choice of $v = u - \phi \in H_D^1(\O)$:
	 \[
	 	\|A^{-1/2} \gperp \psi\|
		 \le \inf_{v\in H^1_{D}(\O)}\|A^{1/2}\nabla_h(v-w)\|
		 \leq \|A^{1/2}\nabla_h(u-\phi-w\| = \|A^{-1/2} \gperp \psi\|.
	 \]
	This completes the proof of the lemma.
\end{proof}

 \begin{lem}\label{PS:2d}
Let $u\in H^1_{D}(\O)$ be the solution of  {\em (\ref{pde})}. In two dimensions,
 for all $w \in H^1(\cT)$, we have
 \beq \label{equality}
	 \|A^{1/2}\nabla_h(u-w)\|^2 =
	 \inf_{\btau\in \S_f(\O)}\| A^{-1/2}\btau+
	 A^{1/2}\nabla_h w\|^2 + \inf_{v\in H^1_{D}(\O)}\|A^{1/2}\nabla_h(v-w)\|^2.
 \eeq
 \end{lem}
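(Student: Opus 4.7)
The plan is to observe that Lemma~\ref{7.2} has already done essentially all the work, and the equality in (\ref{equality}) will follow by combining its two identities with the Pythagorean relation (\ref{HM}) coming from the $A$-weighted Helmholtz decomposition of $\nabla_h(u-w)$.

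Concretely, I would start from a fixed $w \in H^1(\cT)$ and invoke the Helmholtz decomposition (\ref{HM:0}) in two dimensions to write
\[
A\nabla_h(u-w) = A\nabla\phi + \gperp\psi,
\]
with $\phi \in H^1_D(\O)$ and $\psi \in \cH$. Since $\nabla\phi$ and $A^{-1}\gperp\psi$ are $A$-orthogonal in $L^2(\O)^2$ (this is exactly the content of (\ref{HM})), we obtain
\[
\|A^{1/2}\nabla_h(u-w)\|^2 = \|A^{1/2}\nabla\phi\|^2 + \|A^{-1/2}\gperp\psi\|^2.
\]
Then I would apply the two identities of Lemma~\ref{7.2} to substitute
\[
\|A^{1/2}\nabla\phi\|^2 = \inf_{\btau\in\S_f(\O)}\|A^{-1/2}\btau + A^{1/2}\nabla_h w\|^2
\]
and
\[
\|A^{-1/2}\gperp\psi\|^2 = \inf_{v\in H^1_D(\O)}\|A^{1/2}\nabla_h(v-w)\|^2,
\]
which yields the claimed equality (\ref{equality}).

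There is really no hard step here: the nontrivial work is hidden in Lemma~\ref{7.2}, where the infima are attained at the explicit witnesses $\btau = \gperp\psi - A\nabla u$ and $v = u - \phi$ provided by the Helmholtz decomposition. The only point that needs care is that this argument is intrinsically two-dimensional: it uses the scalar stream function $\psi$ and the operator $\gperp$, both of which rely on $d=2$, and the space $\cH$ has been set up precisely so that $\gperp\psi - A\nabla u$ lies in the equilibrated flux space $\S_f(\O)$ (its tangential-type constraint on $\Gamma_N$ encodes the correct Neumann data for $\btau\cdot\bn$). Once those pieces are in place, the equality is immediate, and together with Theorem~\ref{Reliability} it shows that (\ref{implicit-reliability}) is sharp in 2D.
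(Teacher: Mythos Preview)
Your proposal is correct and follows essentially the same approach as the paper: the paper's own proof simply states that (\ref{equality}) is a direct consequence of (\ref{HM}) and Lemma~\ref{7.2}, which is precisely the combination you describe.
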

 
 \begin{proof}
 The identity (\ref{equality}) is a direct consequence of (\ref{HM}) and Lemma~\ref{7.2}.
 \end{proof}
 
 \begin{rem}
 	It is easy to see that if $w \in H_D^1(\O)$ in \upshape{Lemma \ref{PS:2d}}, i.e., $w$ is conforming, the second part on the right of (\ref{equality}) vanishes. It is thus natural to refer $ \inf \limits_{\btau\in \S_f(\O)}\| A^{-1/2}\btau+
	 A^{1/2}\nabla_h w\|^2$ or $\|A^{1/2} \nabla \phi\|$ as the conforming error and $\inf \limits_{v\in H^1_{D}(\O)}\|A^{1/2}\nabla_h(v-w)\|^2$ or  $\|A^{-1/2} \gperp \psi\|$ as the nonconforming error. 
 \end{rem}
 
 For each $K \in \cT$, denote by $\Lambda_K$ and $\lambda_K$ the maximal and
minimal eigenvalues of $A_K = A|_K$, respectively. For each $F \in \cE$, let $\Lambda_F^\pm = \Lambda_{K_F^\pm}$,
$\lambda_{\sF}^\pm =\lambda_{K_F^\pm}$,  and  $\lambda_F = \min\{\lambda_F^+, \lambda_F^-\}$
if $F \in \cE_I$ and $\lambda_F = \lambda_F^-$ if $F \in \cE_D \cup \cE_N$.  
 To this end, let
\[
	\Lambda_\cT = \max_{K \in \cT} \Lambda_K \quad \mbox{and} \quad
	\lambda_\cT=  \min_{K \in \cT} \lambda_K.
\]
Assume that each local matrix $A_K$ is similar to the identity matrix in the sense that its 
	maximal and minimal eigenvalues are almost of the same size. More precisely,
	there exists a moderate size constant $\kappa > 0$ such that
	\[
		\dfrac{\Lambda_K}{ \lambda_K} \le \kappa , \quad \forall \,K \in \cT.
	\] 
Nevertheless, the ratio of global maximal and minimal eigenvalues, $\Lambda_\cT/\lambda_\cT$, 
is allowed to be very large.

For a function $w \in H^1(\cT)$ , denote its traces on $F$ by $w|_F^-:= (w|_{K_F^-})|_F $ and $w|_F^+:= (w|_{K_F^+})|_F$
 and the jump of $w$ across the edge $F$ by 
\[
	\jump{w}|_F = \left\{
	\begin{array}{ll}
		w|_{F}^- - w|_{F}^+,&\forall \,F \in \cE_I,\\[2mm]
		w|_F^-, & \forall \, F \in \cE_D \cup \cE_N.
	\end{array}
	\right.
\]

 In the following lemma, we show the relationship between the nonconforming error and the residual based error of solution jump on edges. It is noted that the constant is robust with respect to the coefficient jump.

 \begin{lem} \label{lem-nc-error} Let $w$ be a fixed function in $H^1(\cT)$.
In two dimensions, there exists a constant $C_r$ that is independent of the jump of the coefficient such that
\beq \label{rel:nc-b}
	\inf_{\btau \in \mathring{H}_D(\curll; \O)} \|A^{1/2} ( \btau - \nabla_h  w)\|
	\le C_r\, \left( \sum_{F \in \cE_I \cup \cE_D} \lambda_F h_F^{-1} \|\jump{w}\|_{0,F}^2 \right)^{1/2}.
\eeq
\end{lem}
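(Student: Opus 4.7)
The plan is to use Lemma~\ref{7.2} to recast the infimum as $\|A^{-1/2}\gperp\psi\|$, express this quantity as a sum of edge integrals via integration by parts, and then bound the resulting sum by the desired jump-norm sum using a weighted interpolation of $\psi$. Since $\Omega$ is two-dimensional and simply connected (as is implicit in the 2D Helmholtz decomposition used earlier), $\mathring{H}_D(\curll;\Omega) = \nabla H_D^1(\Omega)$, so the left-hand side of \eqref{rel:nc-b} equals $\inf_{v\in H_D^1(\Omega)}\|A^{1/2}\nabla_h(v-w)\|$, which by Lemma~\ref{7.2} equals $\|A^{-1/2}\gperp\psi\|$ with $\psi\in\cH$ the Helmholtz component associated with $w$. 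Using $\gradt(\gperp\psi)=0$, the identity $(\nabla u,\gperp\psi)=0$ (valid because $u\in H_D^1(\Omega)$ and $\partial_{\bt}\psi=0$ on $\Gamma_N$), and element-wise integration by parts on $\cT$, I would obtain
\[
\|A^{-1/2}\gperp\psi\|^2 = -(\nabla_h w,\gperp\psi) = -\sum_{F\in \cE_I\cup\cE_D}\int_F\jump{w}\,\partial_{\bt_F}\psi\,ds.
\]

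The next step is to bound this edge sum by $C_r\bigl(\sum_F\lambda_F h_F^{-1}\|\jump{w}\|_{0,F}^2\bigr)^{1/2}\|A^{-1/2}\gperp\psi\|$; cancelling one factor of $\|A^{-1/2}\gperp\psi\|$ then gives the claim. Because $\partial_{\bt_F}\psi$ lives only in $H^{-1/2}(F)$, a direct $L^2(F)$ Cauchy--Schwarz is unavailable. Instead, I would introduce a Cl\'ement/Scott--Zhang-type interpolant $\psi_h$ of $\psi$ whose nodal or edge values are $\lambda$-weighted local averages (with weights proportional to $\lambda_K$ on each patch, so the approximation constants are coefficient-robust), split $\partial_{\bt_F}\psi=\partial_{\bt_F}\psi_h+\partial_{\bt_F}(\psi-\psi_h)$, bound the polynomial piece $\partial_{\bt_F}\psi_h$ in $L^2(F)$ via a discrete trace/inverse inequality controlled by an edge-patch $L^2$ norm of $\gperp\psi_h$, and handle the remainder by integrating by parts on $F$ and invoking local approximation estimates for $\psi-\psi_h$. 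Pairing, via Cauchy--Schwarz on the edge sum, the factor $\lambda_F h_F^{-1}\|\jump{w}\|_{0,F}^2$ against an $h_F\lambda_F^{-1}$-weighted edge seminorm of $\psi$, and summing the latter to a multiple of $\|A^{-1/2}\gperp\psi\|^2$ using the finite overlap of edge patches, yields the required bound.

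The main obstacle is the coefficient-robust interpolation estimate for $\psi$: a generic $\psi\in H^1(\Omega)$ has global regularity whose constants could a priori depend on the coefficient ratio $\Lambda_\cT/\lambda_\cT$, so one must carefully exploit the $A^{-1}$-weighting in the natural norm $\|A^{-1/2}\gperp\psi\|$ together with the local variational structure of $\psi$ on vertex and edge patches to obtain approximation and trace constants depending only on shape regularity and on the local anisotropy bound $\kappa$. The weighted interpolation framework and edge-trace estimates required here parallel the coefficient-robust techniques developed in the companion paper \cite{CaHeZh:17}, and they are what ultimately drive $C_r$ to be independent of the coefficient distribution.
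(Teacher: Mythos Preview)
Your proposal is correct and follows essentially the same route as the paper: reduce the infimum to $\|A^{-1/2}\gperp\psi\|$ via Lemma~\ref{7.2}, integrate by parts to get the edge sum $-\sum_{F\in\cE_I\cup\cE_D}\int_F\jump{w}\,(\gperp\psi\cdot\bn_F)\,ds$, and then bound this sum robustly against $\|A^{-1/2}\gperp\psi\|$. The only difference is packaging: the paper black-boxes the crucial edge estimate by invoking Lemma~2.4 of \cite{CaHeZh:17}, which directly yields $\int_F\jump{w}\,(\gperp\psi\cdot\bn_F)\,ds\le C\,h_F^{-1/2}\|\jump{w}\|_{0,F}\|\gperp\psi\|_{0,K_F^-}$ with $K_F^-$ chosen as the element of smaller $\lambda$, whereas you unpack the mechanism (weighted Cl\'ement/Scott--Zhang interpolation of $\psi$, trace and inverse inequalities, finite-overlap summation) that produces precisely this kind of coefficient-robust bound. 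Both lead to the same conclusion with the same dependence on $\lambda_F$.
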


\begin{proof}
Let $\psi$ be given in  the Helmholtz decomposition in (\ref{HM:0}), then 
integration by parts gives
\[
	\|A^{-1/2} \gperp \psi \|^2 =(\nabla_h (u-w), \gperp \psi) = -
	\sum_{F \in \cE_I \cup \cE_D} \int_F \jump{w}\, \left(\gperp \psi \cdot \bn_F\right) \,ds .
\]
Without loss of generality, assume that $\lambda_F^- \le \lambda_F^+$ for each $F \in \cE_I$.
It follows from Lemma 2.4 in \cite{CaHeZh:17} and the Cauchy-Schwarz inequality that
\begin{eqnarray*}
\sum_{F \in \cE_I\cup \cE_D} \int_F \jump{w}\, \left(\gperp \psi \cdot \bn_F\right) \,ds
&\le &C \sum_{F \in \cE_I\cup \cE_D}h_F^{-1/2} \| \jump{w}\|_{0,F} \|\gperp \psi\|_{0,K_F^-}\\[2mm]
&\le& C \left( \sum_{F \in \cE_I\cup\cE_D} \lambda_F h_F^{-1}  \|\jump{w}\|_{0,F}^2\right)^{1/2} \|A^{-1/2}\gperp \psi\|,
\end{eqnarray*}
which, together with the above equality, yields
 \[
\|A^{-1/2} \gperp \psi \|
 \le C\, \left( \sum_{F \in \cE_I \cup \cE_D} \lambda_F h_F^{-1} \|\jump{w}\|_{0,F}^2 \right)^{1/2}.
\]
This completes the proof of the lemma.
\end{proof}

\section{Error estimators and indicators}
\setcounter{equation}{0}

\subsection{NC finite element approximation}
For the convenience of readers, in this subsection we introduce the nonconforming finite element space and its properties.


Let $\mathbb{P}_k(K)$  and $\mathbb{P}_k(F)$ be the spaces of polynomials of degree less than or equal to $k$
on the element $K$ and $F$, respectively.
Define the nonconforming finite element space of order $k (k\ge 1)$ on the triangulation $\cT$ by
\beq 	\cU^k(\cT) \! = \!
  \left\{ v \in L^2(\O) \! :  v|_K \in \mathbb{P}_k(K), \forall \, K \in \cT\, \mbox{and}\,
	 \int_\sF \jump{v}\, p \,ds =0, \forall \, p \in \mathbb{P}_{k-1}(F),  \;\forall \,  F \in \cE_I  \right\}
\eeq	
and its subspace by
\[
	\cU_D^k(\cT) =
	\left\{
	v \in \cU^k(\cT) \! :\, \int_\sF v \, p \,ds =0, \quad \forall \,p \in \mathbb{P}_{k-1}(F) \; \mbox{and} \; \forall \,F \in \cE_D
	\right\}.
\]
The spaces defined above are exactly the same as those defined in \cite{CrRa:73}
for $k=1$, \cite{FoSu:83} for $k=2$, \cite{ChaLee:00} for $k=4$ and $6$, \cite{Ai:08} for general odd order, and \cite{Sto:06, Ba:07} for general order.
Then the nonconforming finite element approximation of order $k$ is to find $u_{_\cT} \in \cU_D^{k}(\cT)$ such that
\beq \label{NC solution}
	a_h(u_{_\cT}, v) := (A \nabla_h u_{_{\cT}}, \nabla_h v)
	=(f,v)-\left<g, v\right>_{\Gamma_N}, \quad \forall \, v  \in \cU_\sD^{k}(\cT).
\eeq

Below we describe basis functions of $\cU^{k}(\cT)$ and their properties.
To this end, for each $K\in\cT$, let $m_k=\mbox{dim}(\mathbb{P}_{k-3}(K))$ for $k> 3$ and $m_k=0$ for $k\le 3$. 
Denote by
$\{\bx_j, j=1, \cdots, m_k\}$
the set of all interior Lagrange points in $K$ with respect to
the space $\mathbb{P}_k(K)$ and
by $P_{j,K} \in \mathbb{P}_{k-3}(K)$ the nodal basis function corresponding to $\bx_j$, i.e., 
 \[
 P_{j,K}(\bx_i) =\delta_{ij} \,\,\mbox{ for } i=1,\,\cdots,\, m_k,
 \]  
where $\delta_{ij}$ is the Kronecker delta function.
For each $0 \le j \le k-1$, let
$L_{j,F}$ be the $j$th order Gauss-Legendre polynomial on $F$ such that $L_{j,F}(\be_\sF)=1$. 
Note that $L_{j,F}$ is an odd or even function when $j$ is odd or even. Hence, $L_{j,F}(\bs_F)=-1$ for odd $j$  and $L_{j,F}(\bs_F)=1$ for even $j$. 

For odd $k$, the set of degrees of freedom of $\cU^k(\cT)$ (see Lemma 2.1 in \cite{Ai:08}) can be given by
\begin{equation}
	 \int_K v \,P_{j,K} \,dx, \quad j=1, \,\cdots, \,m_k
\end{equation} 
for all $K \in \cT$ and
\begin{equation}
	\int_F v \,L{_{j,F}} \,ds, \quad  j=0, \,\cdots,\, k-1
\end{equation}
for all $F \in \cE$.
Define the basis function $\phi_{i,K} \in \cU^k(\cT)$ satisfying 
\begin{equation} \label{element-basis}
\left\{
	\begin{array}{lll}
	\int_{K'} \phi_{i,K} \,P_{j,K'} \,dx =  \delta_{ij}\delta_{KK'},  & \forall \,j=1,\, \cdots ,\,m_k, & \forall \, K' \in \cT,\\[4mm]
	\int_{F} \phi_{i,K} \,L_{j,F} \,ds=0, & \forall \,j=0,\, \cdots,\, k-1, & \forall \,F \in \cE,
	\end{array}
	\right.
\end{equation}
for $i=1, \,\cdots, \, m_k$ and $ K\in \cT$, 
and the basis function $\phi_{i,F} \in \cU^k(\cT)$ satisfying
\begin{equation}\label{edge-basis}
\left\{
	\begin{array}{lll}
	\int_{K} \phi_{i,F} \,P_{j,K} \,dx=0, & \forall \,j=1, \,\cdots,\,m_k, & \forall \,K \in \cT, \\[4mm]
	\int_{F'} \phi_{i,F} \,L_{j,F'} \,ds = \delta_{ij} \delta_{_{FF'}},  & \forall \,j=0,\, \cdots,\, k-1, & \forall \,F' \in \cE,
	\end{array}
	\right.
\end{equation}
for $i=0, \,\cdots, \, k-1$ and $ F \in \cE$.
Then the nonconforming finite element space is the space spanned by all these basis functions, i.e.,
\[
	\cU^k(\cT) = \mbox{span}
	\left\{\phi_{i,K}:\, K \in \cT\right\}_{i=1}^{m_k} \oplus
	\mbox{span} \left\{\phi_{i,F}: \; F \in \cE \right\}_{i=0}^{k-1}.
\] 


\begin{lem}\label{element-basis-0-boundary}
For all $K \in \cT$, the basis functions $\{ \phi_{j,K}\}_{j=1}^{m_k}$ have support on $K$ and 
vanish on the boundary of $K$, i.e.,
\[
	\phi_{j,K}\equiv 0 \; \mbox{ on }\partial K.
\]
\end{lem}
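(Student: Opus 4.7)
The plan is to exhibit an explicit candidate $\tilde\phi_{i,K}$ that is supported in $\overline{K}$, vanishes on $\partial K$, and realizes the same degrees of freedom as $\phi_{i,K}$; unisolvence of the DOFs listed just before~(\ref{element-basis}) (Lemma~2.1 of~\cite{Ai:08}) will then force $\phi_{i,K}=\tilde\phi_{i,K}$. Note first that for $k\leq 3$ one has $m_k=0$ and the claim is vacuous, so I assume $k\geq 5$ is odd and fix $K\in\cT$ and $i\in\{1,\ldots,m_k\}$.

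For the construction, let $b_K\in\mathbb{P}_3(K)$ denote the cubic bubble on $K$ (the product of its three barycentric coordinates), which vanishes on $\partial K$ and is strictly positive in the interior. Set $\tilde\phi_{i,K}:=b_K q_i$ on $K$ and $\tilde\phi_{i,K}\equiv 0$ on $\Omega\setminus K$, where $q_i\in \mathbb{P}_{k-3}(K)$ is chosen to satisfy $\int_K b_K q_i P_{j,K}\,dx=\delta_{ij}$ for $j=1,\ldots,m_k$. Existence and uniqueness of $q_i$ follow from the fact that the symmetric bilinear form $(q,p)\mapsto \int_K b_K\,q\,p\,dx$ is positive definite on $\mathbb{P}_{k-3}(K)$ (since $b_K>0$ in the interior) and that $\{P_{j,K}\}_{j=1}^{m_k}$ is a basis of $\mathbb{P}_{k-3}(K)$. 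By construction $\tilde\phi_{i,K}|_K\in\mathbb{P}_k(K)$, and $\tilde\phi_{i,K}$ is zero off $K$ and on $\partial K$.

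The next step is to verify that $\tilde\phi_{i,K}\in \cU^k(\cT)$ and meets the remaining prescriptions in~(\ref{element-basis}). Because $b_K$ vanishes on $\partial K$ and $\tilde\phi_{i,K}$ is identically zero off $K$, the trace of $\tilde\phi_{i,K}$ on every edge is zero from both sides, so all interface jumps vanish (placing $\tilde\phi_{i,K}$ in $\cU^k(\cT)$) and $\int_F\tilde\phi_{i,K}L_{j,F}\,ds=0$ for every $F\in\cE$ and every $j=0,\ldots,k-1$. The element conditions $\int_{K'}\tilde\phi_{i,K}P_{j,K'}\,dx=\delta_{ij}\delta_{KK'}$ hold by the choice of $q_i$ when $K'=K$ and trivially when $K'\neq K$. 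Appealing to unisolvence of the DOFs then yields $\phi_{i,K}=\tilde\phi_{i,K}$, which is supported in $\overline{K}$ and vanishes on $\partial K$. The only nontrivial step in the plan is this final appeal to unisolvence; every other ingredient is a direct verification of the definition, and a self-contained substitute (should one wish to avoid citing \cite{Ai:08}) is a dimension count based on the identity $m_k+3k=\dim\mathbb{P}_k(K)$ valid for odd $k\geq 5$.
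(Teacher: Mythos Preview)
Your proof is correct and takes a genuinely different route from the paper. The paper argues directly: the edge DOF conditions force $\phi_{j,K}|_F$ to be a scalar multiple $a_F L_{k,F}$ of the $k$th Legendre polynomial on each edge $F\in\cE_K$; since $k$ is odd, $L_{k,F}$ takes values $\pm 1$ at the two endpoints, and matching these values at the three vertices of $K$ (continuity of $\phi_{j,K}$ on $\overline{K}$) yields a sign system that forces every $a_F=0$. Your approach instead exhibits an explicit candidate $\tilde\phi_{i,K}=b_K q_i$ and appeals to unisolvence. The paper's argument is self-contained and makes transparent \emph{why} the result hinges on $k$ being odd (for even $k$ the polynomial $L_{k,F}$ is even at the endpoints and the vertex-matching system degenerates); your argument, by contrast, yields a concrete representation of $\phi_{i,K}$ as a cubic bubble times a $\mathbb{P}_{k-3}$ factor, which is useful structural information, but it relegates the odd-$k$ dependence to the cited unisolvence result. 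One caveat on your closing parenthetical: the dimension identity $m_k+3k=\dim\mathbb{P}_k(K)$ alone is not a substitute for unisolvence --- you still need injectivity of the DOF map on $\mathbb{P}_k(K)$, and the standard proof of that injectivity (showing that vanishing edge moments force vanishing on $\partial K$) is exactly the parity-and-continuity argument the paper carries out.
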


\begin{proof}
Obviously, (\ref{element-basis}) implies that $\mbox{support}\{\phi_{j,K}\} \in \overline{K}$.
To show that $\phi_{j,K}|_{\partial K}  \equiv 0$, considering each edge $F \in \cE_K$,
 the second equation of (\ref{element-basis}) indicates that there exists 
$a_\sF \in R$ such that
\[
	\phi_{j,K}|_F = a_\sF L_{k,F}.
\]
Note that $L_{k,F}$ is an odd function on $F$ and that values of $L_{k,F}$ at two end-points of $F$ are 
$-1$ and $1$, respectively.
Now the continuity of $\phi_{j,K}$ in $K$ implies that $a_\sF =0$ and, hence, $\phi_{j,K} \equiv 0$ on $\partial K$.
\end{proof}

For each $K$, denote by $\cE_K$ the set of all edges of $K$.
For each $F \in \cE$, denote by $\o_F$ the union of all elements that share the common edge $F$;
and define a sign function $\chi_F$ on the set $\cE_{K_F^+} \cup \cE_{K_F^-} \setminus \{F\}$ 
(when $F$ is a boundary edge, let $\cE_{K_F^+} =\emptyset$)  such that
\[
	\chi_{F}(F') = \left\{
	\begin{array}{lll}
		1, & \mbox{if} &\be_{_{F'}} =\bar{F} \cap \bar{F}^\prime, \\[2mm]
		-1, & \mbox{if} &\bs_{_{F'}} =\bar{F} \cap \bar{F}^\prime.
	\end{array}
	\right.
\]

\begin{lem}\label{edge-a}
For all $F \in \cE$, the basis functions $\{\phi_{j,F}\}_{j=0}^{k-1}$ have support on $\overline{\o}_F$, and their restrictions
on $ \cE_{K_F^+} \cup \cE_{K_F^-}$
has the following representation:
\begin{equation}\label{edge-1-a}
	\phi_{j,F} = \left\{
	\begin{array}{lll}
	\dfrac{1}{\|L_{j,F}\|_{0,F}^2} \left( L_{j,F} - L_{k,F} \right), & \mbox{on} &F,\\[4mm]
	0,  & \mbox{on} &\cE_{K_F^+} \cup \cE_{K_F^-} \setminus \{F\}
	\end{array}
	\right.
\end{equation}
when $j$ is odd, and
\begin{equation}\label{edge-1-b}
	\phi_{j,F} = \left\{ 
	\begin{array}{lll}
		\dfrac{1}{\|L_{j,F}\|_{0,F}^2} L_{j,F},  &\mbox{on}& F,\\[6mm]
		\dfrac{  \chi_{F}(F')}{\|L_{j,F}\|_{0,F}^2} L_{k,F'},
		&\mbox{on}&F' \in  \cE_{K_F^+} \cup \cE_{K_F^-} \setminus \{F\}\\[4mm]
	\end{array}
	\right.
\end{equation}
when $j$ is even.

\end{lem}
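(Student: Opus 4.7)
The plan is to exploit the local unisolvence of the degrees of freedom defining $\cU^k(\cT)$ together with the orthogonality and parity properties of the Gauss--Legendre polynomials $L_{j,F}$ (recall $k$ is odd throughout this section).

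First I would verify that $\phi_{j,F}$ vanishes outside $\overline{\o}_F$. For any $K'\notin\{K_F^+,K_F^-\}$, no edge of $K'$ equals $F$, so the second equation of (\ref{edge-basis}) yields $\int_{F''}\phi_{j,F}\,L_{j',F''}\,ds=0$ for every $F''\in\cE_{K'}$ and every $j'=0,\dots,k-1$, while the first equation of (\ref{edge-basis}) yields $\int_{K'}\phi_{j,F}\,P_{j',K'}\,dx=0$ for every admissible $j'$. Since the cardinality of this set of local moments equals $\dim\mathbb{P}_k(K')$ and they form a unisolvent set (the same unisolvence used in the paragraph preceding Lemma~\ref{element-basis-0-boundary}), we conclude $\phi_{j,F}|_{K'}\equiv 0$.

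Next I would pin down the edge traces of $\phi_{j,F}|_{K_F^-}$ (the argument on $K_F^+$ is identical when $F\in\cE_I$). Because $\phi_{j,F}|_{K_F^-}\in\mathbb{P}_k(K_F^-)$, its restriction to each edge is a polynomial of degree at most $k$ and may be expanded in the Legendre basis $\{L_{i,F'}\}_{i=0}^{k}$. The $L^2$-orthogonality $\int_{F'}L_{i,F'}L_{j',F'}\,ds=\|L_{i,F'}\|_{0,F'}^2\,\delta_{ij'}$ together with the edge moments in (\ref{edge-basis}) determines the coefficients of $L_{0,F'},\dots,L_{k-1,F'}$ completely: on $F$ the coefficient of $L_{j,F}$ is $1/\|L_{j,F}\|_{0,F}^2$ with the others vanishing, and on every $F'\neq F$ all of these coefficients vanish. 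Hence
\[
\phi_{j,F}|_F = \frac{L_{j,F}}{\|L_{j,F}\|_{0,F}^2}+\alpha_F L_{k,F},\qquad \phi_{j,F}|_{F'} = \beta_{F'}\,L_{k,F'}\ \text{for } F'\in\cE_{K_F^-}\setminus\{F\},
\]
with undetermined scalars $\alpha_F$ and $\beta_{F'}$.

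The remaining step is to fix $\alpha_F$ and the two $\beta_{F'}$ from the continuity of the degree-$k$ polynomial $\phi_{j,F}|_{K_F^-}$ at the three vertices of $K_F^-$. Evaluating the above traces at each vertex, using the parity identities $L_{k,F}(\bs_F)=-1$ and $L_{k,F}(\be_F)=1$ (since $k$ is odd), the analogous identities for $L_{j,F}$ whose signs depend on the parity of $j$, and translating the endpoint shared between $F$ and each $F'$ into the sign $\chi_F(F')$, produces a $3\times 3$ linear system in $(\alpha_F,\beta_{F_1},\beta_{F_2})$. A direct solve gives $\alpha_F=-1/\|L_{j,F}\|_{0,F}^2$ and $\beta_{F'}=0$ when $j$ is odd, which is (\ref{edge-1-a}), and $\alpha_F=0$ with $\beta_{F'}=\chi_F(F')/\|L_{j,F}\|_{0,F}^2$ when $j$ is even, which is (\ref{edge-1-b}). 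The main obstacle is precisely the sign bookkeeping in this last step: one must carefully track the interplay between the parities of $L_{j,F}$ and $L_{k,F}$ and the orientation convention encoded by $\chi_F(F')$; once the three vertex equations are written down correctly, the asserted formulas fall out immediately.
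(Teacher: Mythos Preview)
Your proposal is correct and follows essentially the same approach as the paper: expand each edge trace in the Legendre basis, use the orthogonality together with the moment conditions in (\ref{edge-basis}) to fix all but the degree-$k$ coefficients, and then determine those remaining coefficients from continuity at the vertices of $K_F^\pm$ combined with the parity of $L_{j,F}$ and $L_{k,F}$. The only cosmetic difference is that the paper first extracts the single relation $\phi_{j,F}|_K(\bs_F)=\phi_{j,F}|_K(\be_F)$ (which follows from continuity at the vertex opposite $F$ and the oddness of $L_{k,F'}$) to read off $\alpha_F$ directly, and then back-solves for the $\beta_{F'}$, whereas you set up and solve the full $3\times 3$ vertex system at once; the content is the same.
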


\begin{proof}
By (\ref{edge-basis}), it is easy to see that
support of $\phi_{j,F}$ is $\overline{\o}_F$.
	Since $\phi_{j,F}|_F^{\pm} \in \mathbb{P}_k(F)$, 
	there exist constants $a_{i,F}^{\pm}$ such that
	\[
	\phi_{j,F}|_\sF^\pm = \sum_{i=0}^{k} a_{{i,F}}^\pm \,L_{i,F}.
	\] 
	Using (\ref{edge-basis}) and the orthogonality of $\{L_{i,F}\}_{i=0}^{k}$, it is obvious that
	\[
	a_{i,F}^\pm= \left\{
	\begin{array}{lll}
		\|L_{j,F}\|_{{0,F}}^{-2}, &\mbox{for} \,i = j, \\[2mm]
		0, &  \mbox{for} \,0\le i\le k-1\mbox{ and }i\neq j 
	\end{array}
	\right.
	\]
and, hence,
	\beq \label{edge-basis-1-c}
		\phi_{j,F}|_F^\pm = \dfrac{1}{\|L_{j,F}\|_{0,F}^2} L_{j,F} + a_{{k,F}}^\pm L_{k,F}.
	\eeq
	By (\ref{edge-basis}), it is also easy to see
	that there exists constant $a_{{j,F,F'}}$ for each $F' \in \cE_{K_F^+} \cup \cE_{K_F^-} \setminus \{F\}$ such that
	\beq\label{edge-basis-1-e}
		\phi_{j,F}|_{F'} = a_{{j,F,F'}} L_{{k,F'}}.
	\eeq
	Since $L_{k,F'}$ is an odd function for all $F' \in \cE_{K_F^+} \cup \cE_{K_F^-} \setminus \{F\}$
	and $\phi_{j,F}$ is continuous in $K_F^+$ and $K_F^-$, (\ref{edge-basis-1-e}) implies that  
	\beq\label{edge-basis-1-d}
		\phi_{j,F}|_K(\bs_\sF) = \phi_{j,F}|_{K}(\be_\sF), \quad \, K \in \{K_F^+, K_F^-\}.
	\eeq
	Combining the facts that $L_{j,F}(\be_\sF)=-L_{j,F}(\bs_\sF) =1 $ for odd $j$
	and that $L_{j,F}(\be_\sF)=L_{j,F}(\bs_\sF) =1 $ for even $j$, (\ref{edge-basis-1-c}), and 
	(\ref{edge-basis-1-d}), we have 
	\[ 
		a_{{k,F}}^\pm=
		\left\{ 
		\begin{array}{lll}
		-\dfrac{1}{\|L_{j,F}\|_{0,F}^2}, & \mbox{for odd } j,\\[4mm]
		0,&\mbox{for even } j,
		\end{array}
		\right.
	\]
which, together with (\ref{edge-basis-1-c}), leads to the formulas of $\phi_{j,F}|_{F}$ in (\ref{edge-1-a})
and (\ref{edge-1-b}).
Finally, for each $F' \in \cE_{K_F^+} \cup \cE_{K_F^-} \setminus \{F\}$, $a_{_{j,F,F'}}$ in  (\ref{edge-basis-1-e})  can be directly computed based on the continuity of $\phi_{j,F}$ in $K_F^+$ and $K_F^-$.
	This completes the proof of the lemma.
	
\end{proof}

\begin{rem}
As a consequence of {\em Lemma~\ref{edge-a}}, 
the basis function $\phi_{j,F}$ is continuous on the edge $F$, i.e.,
$\jump{\phi_{j,F}}\big|_F =0$ for all $j=0,\,\cdots,\,k-1$;
moreover, $\phi_{j,F}$ vanishes at end points of $F$, i.e.,
$\phi_{j,F}(\bs_F) = \phi_{j,F}(\be_F) = 0$, for odd $j$.
\end{rem}

\begin{lem}\label{edge-1}
Let $F$ be an edge of $K$. Assume that $p \in \mathbb{P}_{k-1}(F)$.
Then we have that
\beq\label{edge-c}
	\int_{\partial K} p \,\phi_{j,F} \,ds = \int_F p \,\phi_{j,F} \,ds.
\eeq
Moreover, if 
$
	\displaystyle \int_F p \, \phi_{j,F} \,ds =0$ for all  $j=0, \cdots, k-1$,
 then $p \equiv 0$ on $F$.
 \end{lem}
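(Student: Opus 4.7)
The plan is to read off both claims directly from the explicit edge formulas in Lemma~\ref{edge-a}, combined with orthogonality of the Gauss--Legendre polynomials $\{L_{i,F}\}$ on each edge and the fact that $\deg p \le k-1 < k$.

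For the first identity, I would split
\[
\int_{\partial K} p\,\phi_{j,F}\,ds = \int_F p\,\phi_{j,F}\,ds + \sum_{F' \in \cE_K \setminus \{F\}} \int_{F'} p\,\phi_{j,F}\,ds
\]
and show every summand in the last sum vanishes. For odd $j$, Lemma~\ref{edge-a} gives $\phi_{j,F} \equiv 0$ on each $F' \in \cE_K \setminus \{F\}$, so those contributions are trivially zero. For even $j$, the same lemma shows $\phi_{j,F}|_{F'}$ is a scalar multiple of $L_{k,F'}$; since $p|_{F'} \in \mathbb{P}_{k-1}(F')$, orthogonality of the degree-$k$ Legendre polynomial against lower-degree polynomials forces the integral to vanish.

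For the second assertion, I would expand $p$ in the Gauss--Legendre basis on $F$ as $p = \sum_{i=0}^{k-1} c_i L_{i,F}$ and compute $\int_F p\,\phi_{j,F}\,ds$ using Lemma~\ref{edge-a}. When $j$ is even the integrand reduces, up to a positive factor, to $p\,L_{j,F}$, whose integral equals $c_j \|L_{j,F}\|_{0,F}^2$ by orthogonality, so setting it to zero forces $c_j = 0$. When $j$ is odd, the extra $-L_{k,F}$ appears, but $\int_F p\,L_{k,F}\,ds = 0$ since $\deg p < k$, so the integral again reduces to a positive multiple of $c_j$ and yields $c_j = 0$. Running $j$ through $\{0,\ldots,k-1\}$ kills every coefficient, hence $p \equiv 0$ on $F$.

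The only real friction is the odd/even bookkeeping: the odd-$j$ basis functions carry the extra $L_{k,F}$ correction on $F$ but vanish on the other edges, whereas the even-$j$ ones are clean $L_{j,F}$ on $F$ but leak to neighboring edges as multiples of $L_{k,F'}$. Both off-diagonal contributions disappear for the same reason, namely orthogonality of the $k$-th Legendre polynomial against $\mathbb{P}_{k-1}$, so no further ingredients are needed.
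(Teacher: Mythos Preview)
Your argument is correct and follows exactly the route the paper intends: the paper's own proof is the one-liner ``Since $\{L_{j,F}\}_{j=0}^{k}$ are orthogonal polynomials on $F$, Lemma~\ref{edge-1} is a direct consequence of Lemma~\ref{edge-a},'' and you have simply unpacked that line by handling the odd/even cases from Lemma~\ref{edge-a} explicitly and invoking Legendre orthogonality against $\mathbb{P}_{k-1}$.
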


 \begin{proof}
 Since $\{L_{j,F}\}_{j=0}^{k}$ are orthogonal polynomials on $F$, Lemma \ref{edge-1} is 
 a direct consequence of Lemma \ref{edge-a}.
 \end{proof}

\subsection{Equilibrated flux recovery}\label{odd-flux-recover}
In this subsection, we introduce a fully explicit post-processing procedure for recovering an equilibrated flux.
To this end, 
define $f_{k-1} \in L^2(\O)$ by 
\[
	f_{k-1}|_K = \Pi_{K} (f), \quad \forall \,K \in \cT,
\]
where  $\Pi_{K}$ 
is the $L^2$ projection
onto $\mathbb{P}_{k-1}(K)$.
For simplicity, assume that the Neumann data $g$ is a piecewise polynomial of degree less than or equal to $k-1$,
i.e., $g|_F\in \mathbb{P}_{k-1}(F)$ for all $F \in \cE_N$.

Denote the $H(\mbox{div}; \O)$ conforming Raviart-Thomas (RT) space of index $k-1$ with respect to $\cT$
by
\[
RT^{k-1}(\cT) = \left\{ \btau \in H(\mbox{div};\O) \,: \, \btau|_K \in RT^{k-1}(K),  \;\forall\, K\in\cT \right\},
\]
where $RT^{k-1}(K) = \mathbb{P}_{k-1}(K)^d + \bx \, \mathbb{P}_{k-1}(K) $.
Let
\[
 \Sigma_f^{k-1}(\cT) = \left\{ \btau \in RT^{k-1}: \gradt \btau =f_{k-1} \,\mbox{in} \, \O \quad\mbox{and}\quad \btau \cdot \bn_F=
  g \, \mbox{on} \,\Gamma_N \right\}.
\]   
On a triangular element $K \in \cT$, a vector-valued function $\btau$ in $RT^{k-1}(K)$  is characterized 
by the following degrees of freedom (see Proposition 2.3.4 in \cite{BoBrFo:13}):
 \[
 	\int_K \btau \cdot \bzeta \,dx , \quad \forall \, \bzeta \in \mathbb{P}_{k-2}(K)^d,	
 \]
 and
 \[	
 	\int_F ( \btau \cdot \bn_F) \,p \,ds, \quad \forall \,p \in \mathbb{P}_{k-1}(F) \mbox{ and } \; \forall \, F \in \cE_K.
 \]
 
 For each $K \in \cT$, define a sign function $\mu_K$ on $\cE_K $ such that
\[
\mu_K(F) = 
\left\{
\begin{array}{lll}
	1, &\mbox{if} & \bn_K|_F = \bn_F,\\[2mm]
	-1, &\mbox{if} & \bn_K|_F = -\bn_F.
\end{array}
\right.
\]
Define the numerical flux 
 \beq\label{num-flux}
 \tilde\bsigma_{_\cT} = -A \nabla_h u_{_\cT} \quad \mbox{and} \quad
 \tilde \bsigma_K = -A \nabla (u_{_\cT}|_K), \quad \forall \,K\in \cT.
 \eeq
With the numerical flux $\tilde \bsigma_{_{\cT}}$ given in (\ref{num-flux}), for each element $K \in \cT$, we recover a flux
$\hat \bsigma_K \in RT^{k-1}(K)$ such that:
 \begin{equation}\label{rt:flux:b}
 	\int_K \hat\bsigma_K \cdot \btau\,dx = \int_{K}\tilde\bsigma_{_\cT} \cdot \btau \,dx, \quad
	\forall  \,\btau \in \mathbb{P}_{k-2}(K)^d
 \end{equation}
 and that
 \beq \label{rt:flux:a}
	\!\!  \int_{F}\! \hat \bsigma_K \cdot \bn_F \,L_{i,F} \,ds
	=\!\!\left\{
	\begin{array}{lll}
	 	\!\! \mu_K(F) \| L_{i,F}\|_{0,F}^2 
	\left( \mathlarger\int_{K} \tilde\bsigma_{_\cT} \cdot \nabla \phi_{i, F} \,dx+
	\mathlarger\int_K f \,\phi_{i,F} \,dx \right), &  \forall \, F \in \cE_K \setminus \cE_N,
	\\[6mm]
	 \!\!	\mu_K(F) \| L_{i,F}\|_{0,F}^2 
	\left(\mathlarger\int_{\sF} g \,\phi_{i,F} \,ds \right), & \forall \, F \in \cE_K \cap \cE_N
	\end{array}
	\right.
 \eeq
 for $i=0,\, \cdots, \, k-1$.
 Now the global recovered flux $\hat \bsigma_{_\cT}$ is defined by
 \beq\label{global-flux}
 	\hat \bsigma_{_\cT}\big|_K =\hat \bsigma_{K}, \quad\forall\,\, K\in \cT.
 \eeq

\begin{lem}\label{equi:d}
	Let $u_{_{\cT}}$ be the finite element solution in \em{(\ref{NC solution})}
 and $\hat\bsigma_{_\cT}$ be the recovered flux defined in (\ref{global-flux}). 
 Then for any $K \in \cT$, the following equality
 \begin{equation}\label{equi:b}
	\int_{\partial K} \hat \bsigma_{_\cT} \cdot \bn_K \,q\,dx =
	\int_K \tilde \bsigma_{_\cT} \cdot  \nabla q \,dx+ \int_K f \,q\,dx
 \end{equation}
 holds for all $q \in \mathbb{P}_{k}(K)$.
\end{lem}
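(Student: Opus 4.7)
The plan is to exploit linearity of both sides of (\ref{equi:b}) in $q$ and verify the identity on a basis of $\mathbb{P}_k(K)$ adapted to the moment structure of the recovery. A dimension count gives $\dim \mathbb{P}_k(K) = m_k + 3k$, and the duality relations (\ref{element-basis})--(\ref{edge-basis}) show that the interior bubbles $\{\phi_{j,K}\}_{j=1}^{m_k}$ together with the edge functions $\{\phi_{i,F}|_K\}_{i=0,\,F\in\cE_K}^{k-1}$ form a dual basis on $\mathbb{P}_k(K)$ to the degrees of freedom $\int_K(\cdot)P_{j,K}\,dx$ and $\int_F(\cdot)L_{i,F}\,ds$. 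Hence it suffices to verify (\ref{equi:b}) for $q=\phi_{j,K}$ and for $q=\phi_{i,F}|_K$.

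For $q=\phi_{j,K}$, Lemma \ref{element-basis-0-boundary} gives $\phi_{j,K}\equiv 0$ on $\partial K$, so the left-hand side of (\ref{equi:b}) vanishes. Since $\phi_{j,K}$ is supported in $\overline{K}$ and vanishes on every edge, it lies in $\cU^{k}_D(\cT)$ and the Neumann boundary term in (\ref{NC solution}) contributes nothing. Testing (\ref{NC solution}) with $v=\phi_{j,K}$ then gives $\int_K A\nabla u_{_\cT}\cdot\nabla\phi_{j,K}\,dx = \int_K f\,\phi_{j,K}\,dx$, and combining with $\tilde\bsigma_{_\cT}=-A\nabla_h u_{_\cT}$ from (\ref{num-flux}) shows the right-hand side of (\ref{equi:b}) vanishes as well.

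For $q=\phi_{i,F}|_K$ with $F\in\cE_K$ and $0\le i\le k-1$, Lemma \ref{edge-1} reduces the boundary integral to the single edge $F$, namely $\int_{\partial K}\hat\bsigma_{_\cT}\cdot\bn_K\,\phi_{i,F}\,ds = \int_F\hat\bsigma_K\cdot\bn_K\,\phi_{i,F}\,ds$, because on any other edge $F'\in\cE_K$ the restriction $\phi_{i,F}|_{F'}$ belongs to $\mbox{span}\{L_{k,F'}\}$ (Lemma \ref{edge-a}) and is therefore orthogonal to $\hat\bsigma_K\cdot\bn_{K}|_{F'}\in\mathbb{P}_{k-1}(F')$. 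Expanding $\hat\bsigma_K\cdot\bn_F|_F\in\mathbb{P}_{k-1}(F)$ in the orthogonal Legendre basis and using the duality $\int_F\phi_{i,F}L_{l,F}\,ds=\delta_{il}$, the left-hand side simplifies (after $\mu_K(F)^2=1$) to the prescribed moment in (\ref{rt:flux:a}). For $F\in\cE_I\cup\cE_D$ this is exactly the right-hand side of (\ref{equi:b}). For $F\in\cE_N$ it equals $\int_F g\,\phi_{i,F}\,ds$, and the match with the right-hand side is obtained by testing (\ref{NC solution}) with the admissible $v=\phi_{i,F}\in\cU^{k}_D(\cT)$, noting that on any other Neumann edge $F'\in\cE_K\cap\cE_N$, $\phi_{i,F}|_{F'}\in\mbox{span}\{L_{k,F'}\}$ is orthogonal to $g|_{F'}\in\mathbb{P}_{k-1}(F')$ so that only the $F$ edge contributes to $\int_{\Gamma_N}g\,\phi_{i,F}\,ds$.

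The main obstacle is the Neumann case: its resolution relies crucially on the assumption $g|_{F'}\in\mathbb{P}_{k-1}(F')$ together with the orthogonality of $L_{k,F'}$ to $\mathbb{P}_{k-1}(F')$, which together suppress the spurious contributions that would otherwise arise from even-$i$ basis functions at adjacent Neumann edges. The remaining verifications are straightforward algebraic consequences of Legendre orthogonality and the dual-basis relations (\ref{element-basis})--(\ref{edge-basis}).
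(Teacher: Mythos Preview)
Your proof is correct and follows essentially the same strategy as the paper: both decompose $q\in\mathbb{P}_k(K)$ in the basis $\{\phi_{j,K}\}\cup\{\phi_{i,F}|_K\}$, use Lemma~\ref{element-basis-0-boundary} together with the discrete equation (\ref{NC solution}) for the interior bubbles, and use Lemmas~\ref{edge-a}--\ref{edge-1} with the moment definition (\ref{rt:flux:a}) for the edge functions. Your treatment is in fact slightly more complete than the paper's, which dismisses the Neumann case with ``without loss of generality, assume $K$ is an interior element''; you spell out correctly that for $F\in\cE_N$ one must test (\ref{NC solution}) with $\phi_{i,F}\in\cU^k_D(\cT)$ and kill the spurious contributions on adjacent Neumann edges via the orthogonality of $L_{k,F'}$ to $g|_{F'}\in\mathbb{P}_{k-1}(F')$.
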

\begin{proof}
Without loss of generality, assume that $K \in \cT$ is an interior element.
For each $q \in \mathbb{P}_k(K)$, there exist 
$a_{{j,F}} $  and 
$a_{{j,K}} $
 such that
\[
	q =\sum_{F \in \cE_K} \sum_{j=0}^{k-1} a_{{j,F}} \,\phi_{j,F} 
	+ \sum_{j=1}^{m_k} a_{{j,K}} \,\phi_{j,K} \equiv \sum_{F \in \cE_K} q_F + q_K.
\] 
 It follows from Lemma \ref{element-basis-0-boundary}, (\ref{edge-c}), Lemma \ref{edge-a}, 
 and the definition of the recovered flux $\hat \bsigma_{_\cT}$ in (\ref{rt:flux:a}) that
\begin{eqnarray}\label{elem-2}
	&& \int_{\partial K} \hat \bsigma_K \cdot \bn_K \,q \,ds
	=\sum_{F\in \cE_K} \sum_{j=0}^{k-1} a_{{j,F}}\int_{F}  \hat \bsigma_K \cdot \bn_K \, \phi_{j,F} \,ds
	\nonumber\\[2mm]
	&=&\sum_{F\in \cE_K} \sum_{j=0}^{k-1}  \dfrac{a_{{j,F}} \,\mu_K(F)}{\|L_{j,F}\|_{F}^2}
		\int_{F} \hat \bsigma_K \cdot \bn_F \, L_{j,F} \,ds   
	=\sum_{F\in \cE_K} \sum_{j=0}^{k-1}
	a_{{j,F}}\left(\int_K \tilde \bsigma_{_\cT} \cdot \nabla \phi_{j,F} \,dx+\int_K f\, \phi_{j,F} \,dx \right)
	 \nonumber\\[2mm]
	 &=&
	 \sum_{F \in \cE_K}
	 \left(\int_K \tilde \bsigma_{_\cT} \cdot \nabla q_F \,dx+\int_K f\, q_F \,dx \right).
\end{eqnarray}
Choosing $v=\phi_{j,K}$ in  (\ref{NC solution}) gives
 \[
	\int_K  \tilde \bsigma_{_\cT}  \cdot \nabla \phi_{j,K} \,dx + \int_K f\, \phi_{j,K} \,dx =0
 \]
  for $j=1, \, \cdots,\,m_k$.
  Multiplying the above equality by $a_{{j,K}}$ and summing over $j$ imply
   \begin{equation} \label{elem-0}
	\int_K  \tilde \bsigma_{_\cT}  \cdot \nabla q_K \,dx + \int_K f\, q_K \,dx =0.
 \end{equation}
Now (\ref{equi:b}) is the summation of (\ref{elem-2}) and (\ref{elem-0}).
This completes the proof of the lemma.
\end{proof}

\begin{thm}
Let $u_{_{\cT}}$ be the finite element solution in {\em(\ref{NC solution})}.
Then the recovered flux $\hat \bsigma_{_\cT}$ defined in {\em(\ref{global-flux})}
 belongs to $\S_{f}^{k-1}(\cT)$.

\end{thm}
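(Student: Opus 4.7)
My plan is to verify three properties, each corresponding to one of the defining conditions of $\S_f^{k-1}(\cT)$: (a) $\nabla\cdot\hat\bsigma_K=f_{k-1}|_K$ on every $K\in\cT$; (b) the normal trace of $\hat\bsigma_\cT$ is single-valued across every interior edge $F\in\cE_I$, so that $\hat\bsigma_\cT\in H(\divvr;\O)$; and (c) $\hat\bsigma_\cT\cdot\bn_F=g|_F$ on every $F\in\cE_N$. Throughout I shall use that any element of $RT^{k-1}(K)$ has normal trace in $\mathbb{P}_{k-1}(F)$ on each edge, so equality of normal traces can be detected by testing against the orthogonal basis $\{L_{i,F}\}_{i=0}^{k-1}$.

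For (a), fix $K\in\cT$ and $q\in\mathbb{P}_{k-1}(K)$. Integration by parts gives
\[
(\nabla\cdot\hat\bsigma_K,q)_K=\int_{\partial K}\hat\bsigma_K\cdot\bn_K\,q\,ds-(\hat\bsigma_K,\nabla q)_K.
\]
Since $\nabla q\in\mathbb{P}_{k-2}(K)^d$, (\ref{rt:flux:b}) replaces $(\hat\bsigma_K,\nabla q)_K$ by $(\tilde\bsigma_\cT,\nabla q)_K$. Because $q\in\mathbb{P}_k(K)$, Lemma~\ref{equi:d} rewrites the boundary term as $(\tilde\bsigma_\cT,\nabla q)_K+(f,q)_K$; the two copies of $(\tilde\bsigma_\cT,\nabla q)_K$ cancel and one is left with $(\nabla\cdot\hat\bsigma_K,q)_K=(f,q)_K=(f_{k-1},q)_K$, which proves (a). Condition (c) is immediate from (\ref{rt:flux:a}): on a Neumann edge $F\in\cE_K\cap\cE_N$ we have $K=K_F^-$ and $\mu_K(F)=1$, and the formulas for $\phi_{i,F}|_F$ in Lemma~\ref{edge-a} combined with the orthogonality $L_{k,F}\perp\mathbb{P}_{k-1}(F)$ reduce the right-hand side of (\ref{rt:flux:a}) to $\int_F g\,L_{i,F}\,ds$ for every $i=0,\dots,k-1$; hence $\hat\bsigma_K\cdot\bn_F$ and $g$ share identical moments against the basis $\{L_{i,F}\}$ and must agree as elements of $\mathbb{P}_{k-1}(F)$.

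The main obstacle is (b). For $F\in\cE_I$, the difference $(\hat\bsigma_{K_F^-}-\hat\bsigma_{K_F^+})\cdot\bn_F$ lies in $\mathbb{P}_{k-1}(F)$, so it vanishes iff its moments against all $L_{i,F}$ are zero. By (\ref{rt:flux:a}) together with $\mu_{K_F^-}(F)=1=-\mu_{K_F^+}(F)$, this moment equals $\|L_{i,F}\|_{0,F}^2$ times
\[
\int_{\o_F}\tilde\bsigma_\cT\cdot\nabla\phi_{i,F}\,dx+\int_{\o_F}f\,\phi_{i,F}\,dx,
\]
so the task reduces to showing that this integral is zero. The plan is to invoke (\ref{NC solution}) with $v=\phi_{i,F}$; admissibility $\phi_{i,F}\in\cU_D^k(\cT)$ is verified first by noting that on each Dirichlet edge $F'\in\cE_{K_F^\pm}\setminus\{F\}$ the trace $\phi_{i,F}|_{F'}$ is either zero (odd $i$) or a scalar multiple of $L_{k,F'}$ (even $i$) by Lemma~\ref{edge-a}, and in both cases it is $L^2$-orthogonal to $\mathbb{P}_{k-1}(F')$. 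After substituting $\tilde\bsigma_\cT=-A\nabla_h u_\cT$ into (\ref{NC solution}), the displayed integral reduces to $\int_{\partial\o_F\cap\Gamma_N}g\,\phi_{i,F}\,ds$. The same odd/even description of $\phi_{i,F}|_{F'}$ from Lemma~\ref{edge-a}, combined with $g|_{F'}\in\mathbb{P}_{k-1}(F')$ and $L_{k,F'}\perp\mathbb{P}_{k-1}(F')$, then forces this boundary contribution to vanish edge by edge, yielding (b) and completing the proof that $\hat\bsigma_\cT\in\S_f^{k-1}(\cT)$.
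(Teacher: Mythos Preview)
Your proof is correct and follows essentially the same strategy as the paper's: you verify $H(\divvr;\O)$-conformity by showing the normal jump across each interior edge has vanishing moments against $\{L_{i,F}\}_{i=0}^{k-1}$ via (\ref{rt:flux:a}) and the discrete equation (\ref{NC solution}) with $v=\phi_{i,F}$, the divergence identity via integration by parts combined with (\ref{rt:flux:b}) and Lemma~\ref{equi:d}, and the Neumann condition directly from (\ref{rt:flux:a}). If anything, you are slightly more explicit than the paper in checking that $\phi_{i,F}\in\cU_D^k(\cT)$ and that the Neumann boundary term $\int_{\partial\o_F\cap\Gamma_N} g\,\phi_{i,F}\,ds$ vanishes edge by edge (using Lemma~\ref{edge-a} and $g|_{F'}\in\mathbb{P}_{k-1}(F')\perp L_{k,F'}$), which the paper compresses into a single appeal to the assumption on $g$.
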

\begin{proof}
First we prove that $\hat \bsigma_{_\cT} \in H(\mbox{div}; \O)$.
For each $F\in \cE_I$, note that $\hat \bsigma_{_\cT}|_\sF^\pm \in \mathbb{P}_{k-1}(F)$.
	 Then it follows from Lemma \ref{edge-a}, (\ref{rt:flux:a}), the assumption that $g|_F\in \mathbb{P}_{k-1}(F)$,
	 and (\ref{NC solution}) with $v= \phi_{j,F}$ that
	 \begin{eqnarray*}
	 	\int_\sF \jump{\hat \bsigma \cdot \bn_F}\, \phi_{j,F} \,ds &=&
		 \sum_{K \in \{K_F^+, K_F^-\}} \dfrac{\mu_K(F)}{\| L_{k,F}\|_\sF^2} 
		 \int_{F}\hat \bsigma_K \cdot \bn_F\, L_{j,F} \,ds \\[2mm]
		&=&
		 \sum_{K \in \{K_F^+, K_F^-\}} \left( \int_{K }\tilde \bsigma_{_\cT} \cdot \nabla \phi_{j,F} \,ds 
		 + \int_{K } f \, \phi_{j,F} \,ds \right)\\[2mm]
		 &=&
		  \int_{\o_F }\tilde \bsigma_{_\cT} \cdot \nabla \phi_{j,F} \,ds 
		 + \int_{\o_F } f \, \phi_{j,F} \,ds  - \int_{\Gamma_N \cap \partial \o_F} g \, \phi_{j,F} \,ds \\[2mm]
		 &=&0
	 \end{eqnarray*}
	 for $j=0, \,\cdots, \,k-1$. Now Lemma \ref{edge-1} implies that
	 $\jump{\hat \bsigma_{_\cT} \cdot \bn_F}|_\sF =0$ and, hence, $\hat \bsigma_{_\cT} \in H(\mbox{div} ,\O)$.
	Second, for each $K \in \cT$ and for any $p \in \mathbb{P}_{k-1}(K)$, note that $\nabla p \in \mathbb{P}_{k-2}(K)^d$.
	By integration by parts, (\ref{rt:flux:b}), and Lemma \ref{equi:d}, we have
	\begin{eqnarray*}
		\int_K \nabla \cdot \hat \bsigma_K \,p \,dx &=&
		-\int_K   \hat \bsigma_K \cdot \nabla p \,dx
		+\int_{\partial K} \hat \bsigma_K \cdot \bn_K \, p \,ds\\[2mm]
		&=&-\int_K \tilde \bsigma_{_{\cT}}  \cdot \nabla p \,dx
		  + \left( \int_K \tilde \bsigma_{_{\cT}} \cdot \nabla p\,dx+ \int_K f \,p \,dx \right)
		  =\int_K f \, p \,dx,
	\end{eqnarray*}
	which implies that 
	$\nabla  \cdot \hat \bsigma_{_\cT} = f_{k-1}$ in $\O$.
	
Finally, for $F \in \cE_N$, Lemma \ref{edge-1} and (\ref{rt:flux:a}) gives
\[
	\int_\sF  \hat \bsigma_{_\cT} \cdot \bn_F \phi_{j,F} \,ds=
	\|L_{j,F}\|^{-2}_{0,F}\int_\sF  \hat \bsigma_{_\cT} \cdot \bn_F L_{j,F} \,ds
	=\int_\sF g\,\phi_{j,F} \,ds,
\]
for $j=0, \,\cdots, \,k-1$, which, together with Lemma \ref{edge-1}, implies that
$ \hat \bsigma_{_\cT} \cdot \bn_F =g|_F$ for all $F\in \cE_N$.
This completes the proof of the theorem.
\end{proof}

\subsection{Gradient recovery}\label{gradient recovery}
In this subsection, we recover a gradient in the space of $H(\curll; \O)$ for the nonconforming finite element solutions of odd orders in the two dimensions. We note that such recovery is fully explicit through a simple weighted average on each edge. Such recovery technique can be easily extended to three dimensional finite element problems with the average on facets.  
 For the first order nonconforming Crouzeix-Raviart element, the weighted average approach is first introduced in \cite{CaZh:10a}.
 Define
 \[
 	H_D(\curll; \O) = \{ \btau \in H(\curll;\O): \btau \cdot \bt =0 \mbox{ on } \Gamma_N.\}
 \]
To this end, denote the $H_D(\curll; \O)$ conforming N\'ed\'elec (NE) space of index $k-1$ with respect to $\cT$ by
 \[
 N\!E^{k-1}(\cT) =\left\{\btau\in H_D(\curll; \O)\, :\, \btau|_K\in N\!E^{k-1}(K), \; \forall\, K\in\cT\right\},
 \]
where  $N\!E^{k-1}(K) =\mathbb{P}_{k-1}(K)^2+ (-y, x) \,\mathbb{P}_{k-1}(K)$.
On a triangular element $K \in \cT$, a vector valued function $\btau \in N\!E^{k-1}(K)$ is characterized by the 
following degrees of freedom (see Proposition 2.3.1 in \cite{BoBrFo:13}):
\[
		\int_K \btau \cdot \bzeta \,dx,  \quad \forall \,\bzeta \in \mathbb{P}_{k-2}(K)^2
		\quad\mbox{and}\quad
  \int_{F} \big( \btau \cdot \bt \big) \,p \,dx,   \quad
		 \forall \, p \in \mathbb{P}_{k-1}( F)\mbox{ and } \forall F \in \cE_K.
\]

Define the numerical gradient
 \beq\label{num-gradient}
\tilde \brho_{_\cT} = \nabla_h u_{_{\cT}} \quad \mbox{and} \quad
\tilde \brho_K = \nabla u_{_\cT}|_K, \quad \forall \, K  \in \cT . 
\eeq

For each edge $F\in\cE$, denote the $i$-th moment of a weighted average of the tangential components of the numerical 
gradient by 
\[
	S_{i,F}=\left\{
	\begin{array}{lll}
	\theta_\sF \displaystyle{\int_{F}} \left(  \tilde \brho_{{K_F^-}} \cdot \bt_F \right) L_{i,F} \,ds
	+\left(1-\theta_\sF\right) \displaystyle{\int_{F}} \left(  \tilde \brho_{{K_F^+}} \cdot \bt_F \right) L_{i,F} \,ds ,
	& \mbox{if} & F \in \cE_I,\\[6mm]
	0, & \mbox{if} & F \in \cE_D,\\[4mm]
	\displaystyle{\int_{F}} \left(  \tilde \brho_{{K_F^-}} \cdot \bt_F \right) L_{i,F} \,ds,
	& \mbox{if} & F \in \cE_N
	\end{array}
	\right.
\]
with the weight $\theta_\sF = \dfrac{\Lambda_F^-}{\Lambda_F^-+\Lambda_F^+}$ for $i=0,\, \cdots,\,k-1$. 
For each $K\in\cT$, define $\hat\brho_{K} \in N\!E^{k-1}(K)$ by
\beq\label{rho-construction}
\left \{
\begin{array}{lll}
		\displaystyle{\int_{F}} \big( \hat \brho_{K}  \cdot \bt_F \big )L_{i,F} \,ds= S_{i,F}, &\qquad
	\mbox{for } \,i=0, \cdots, k-1 \; \mbox{and} \;\forall\,\, F \in \cE_K,\\[6mm]
		\displaystyle{\int_{K}} \hat\brho_{K} \cdot \bzeta \,dx = \int_K \tilde \brho_{K} \cdot \bzeta\,dx ,
	& \qquad\forall \,\bzeta \in \mathbb{P}_{k-2}(K)^2.
	\end{array}
	\right.
\eeq
Then the recovered gradient $\hat\brho_{_\cT}$ is defined in $N\!E^{k-1}(\cT)$ such that 
\begin{equation}\label{gradient-recovery}
\hat\brho_{_\cT}\big|_K = \hat\brho_{K}, \quad\forall\,\,K\in\cT.
\end{equation}


\subsection{Equilibrated a posteriori error estimation for nonconforming solutions}
In section~\ref{odd-flux-recover}, we introduce an equilibrated flux recovery for the nonconforming elements of odd order. 
The construction is fully explicit. 
Let $\hat{\bsigma}_{_\cT} \in \S_f(\O)$ be the recovered flux defined in (\ref{global-flux}),
then the local indicator and the global estimator for the conforming error
are defined by
 \beq \label{estimators:cf_l}
	 \eta_{\sigma,K} = \|A^{-1/2} (\hat \bsigma_{_\cT} - \tilde \bsigma_{_\cT}) \|_{0,K}, \quad \forall \, K \in \cT 
	 \eeq
and
 \beq \label{estimators:cf_g}
	\eta_\sigma =\left( \sum_{K\in\cT} \eta^2_{\sigma,K}\right)^{1/2}
	=\|A^{-1/2} (\hat \bsigma_{_\cT} - \tilde \bsigma_{_\cT}) \|,  
\eeq 
respectively.

 In section~\ref{gradient recovery}, 
we recover the gradient 
in $H_D(\curll; \O)$ through averaging on each edge. This post-process procedure is also fully explicit. 
Let $\hat{\brho}_{_\cT} \in H_D(\curll; \O)$ be the recovered gradient defined in (\ref{gradient-recovery}), then the local indicator and the global estimator for the nonconforming error are defined by
 \beq \label{estimators:rho_l}
	 \eta_{\rho,K} = \|A^{1/2} (\hat \brho_{_\cT} - \tilde \brho_{_\cT}) \|_{0,K}, \quad \forall \, K \in \cT 
	 \eeq
and
 \beq \label{estimators:rho_g}
	\eta_\rho =\left( \sum_{K\in\cT} \eta^2_{\rho,K}\right)^{1/2}
	=\|A^{1/2} (\hat \brho_{_\cT} - \tilde \brho_{_\cT}) \|,  
\eeq 
respectively.

The local indicator and the global estimator for the nonconforming elements
are then defined by
\beq\label{estimators}
	\eta_K = \left( \eta_{\sigma,K}^2 + \eta_{\rho,K}^2\right)^{1/2}  
	\quad \mbox{and}\quad
	 \eta= \left( \sum_{K\in\cT} \eta_K^2 \right)^{1/2}
	 = \left( \eta_\sigma^2 + \eta_\rho^2 \right)^{1/2},
\eeq
respectively.

\subsection{Equilibrated a posteriori error estimation for DG solutions}
We first introduce the DG finite element method.
For any $K\in\cT$ and some $\alpha>0$, let 
 \[
 V^{1+\alpha}(K)=\{v\in H^{1+\a}(K)\,:\, \Delta\, v \in L^2(K)\}
 \]
and let 
 \[
  V^{1+\a}(\cT) :=\{v\, :\, v|_K\in V^{1+\alpha}(K), \quad \forall\, K\in\cT\}.
  \]
  We also denote the discontinuous finite element space $D_k$ of order $k $ (for $k \ge 0)$ by
  \[
  	D_k = \{v \in L^2(\O) : v|_K \in P^k(K), \quad \forall K \in \cT\}.
  \]
  For each $F \in \cE_I$, we define the following weights:
$\omega_F^\pm = \dfrac{\lambda_F^\mp}{\lambda_F^-+\lambda_F^+}$. In the weak formulation, we use the following weighted average:
\[
	\{v\}_w^F = \begin{cases} w_F^+ v_F^+ + w_F^- v_F^-, & F \in \cE_I,\\
	v ,& F \in \cE_D \cup \cE_N.
	\end{cases}
 \]
It is noted that the weighted average defined in the above way guarantees the robustness of the error estimation, see \cite{CaHeZh:17}.
 
Similar to \cite{CaHeZh:17} we introduce the following DG formulation for (\ref{pde}): find $u\in V^{1+\epsilon}(\cT)$ with $\epsilon >0$
 such that
 \beq\label{DGV}
 a_{dg}(u,\,v) = (f,\,v)   -
\left< g_N, v \right>_{\Gamma_N}, \quad\forall\,\, v \in V^{1+\epsilon}(\cT),
\eeq
where the bilinear form $a_{dg}(\cdot,\,\cdot)$ is given by
\begin{eqnarray*}
a_{dg}(u,v)&=&(A\nabla_h u,\nabla_h v)
 +\sum_{F\in\cE \setminus \cE_N}\int_F\gamma  \dfrac{\a_H}{h_F} \jump{u}
 \jump{v}\,ds \\[2mm] \nonumber
&&\quad -\sum_{F\in\cE  \setminus \cE_N}\int_F\{A\nabla
u\cdot\bn_F\}_{w}^F \jump{v}ds -
\sum_{F\in\cE \setminus \cE_N}\int_F \{A\nabla
v\cdot\bn_F\}_{w}^F\jump{u}ds.
\end{eqnarray*}
Here, 
$\a_H$ is the harmonic average of $\lambda$ over $F$, i.e., $\a_H = \dfrac{\lambda_F^+ \lambda_F^-}{\lambda_F^+  + \lambda_F^-}$
and $\gamma$ is a positive constant only depending on the shape of elements.
The discontinuous Galerkin finite element method  is then to
seek $u^{dg}_k \in D_k$ such that
\beq\label{problem_dg}
a_{dg}(u^{dg}_k,\, v) = (f,\,v)\quad \forall\, v\in D_k.
\eeq
For simplicity, we consider only this symmetric version of the interior penalty discontinuous Galerkin finite element method 
since its extension to other versions of discontinuous Galerkin approximations is straightforward.

Thanks to the complete discontinuity of the space $D_k$, an equilibrate flux for the DG solution $u_k^{dg}$ can be easily obtained. Here we present a formula similar to those introduced in \cite{Ai:07b,ern2007accurate,becker2016local}.
Recovering an equilibrate flux, $\hat \bsigma_k^{dg} \in RT^{k-1}(K)$, such that
 \begin{equation}\label{rt:flux:dg}
 \begin{split}
 	&( \hat\bsigma_k^{dg} , \btau)_K =-(A \nabla u_{k}^{dg} ,\btau )_K
	- \sum_{F \in \cE_K \cap \cE_I} \dfrac{1}{2} \mu_K\left< A \btau \cdot \bn_F, \jump{u_{k}^{dg}}\right>_F
	- \sum_{F \in \cE_K \cap \cE_D} \left< A \btau \cdot \bn_F, u_{k}^{dg}\right>_F
\end{split}
 \end{equation}
 for all $K \in \cT$ and for all $\btau \in \mathbb{P}_{k-2}(K)^d$,
 and that
 \beq 
\hat \bsigma_k^{dg} \cdot \bn_F = 
 \begin{cases}
 - \{A \nabla u_h \cdot \bn_F\} + \gamma h_F^{-1} \jump{u_h},& \forall F \in \cE_I,\\
  - A \nabla u_h \cdot \bn_F  + \gamma h_F^{-1} u_h, & \forall F \in \cE_D,\\
  g_N, & \forall F \in \cE_N.
\end{cases}
 \eeq
 It is easy to verify that the flux defined in (\ref{rt:flux:dg}) is equilibrate, i.e., $\nabla \cdot \hat \bsigma_k^{dg} = f_k$ where $f_k$ is the $L^2$ projection of $f$ onto the space of $D_k$.
 
 The recovery of the DG solution in the $H^1(\O)$ or the  $\mathring{H}_D(\curll; \O)$ spaces, again, suffers the lack of robustness. Similar to the nonconforming method, we also recover a gradient in the $H_D(\curll;\O)$ space. Let $\brho_k^{dg}$ be the recovered gradient for $u_k^{dg}$ based on the formulas in section \ref{gradient recovery}.
 The error indicators and estimators for $u_k^{dg}$ can then be similarly defined as in (\ref{estimators:rho_l})--(\ref{estimators}).


\section{Global reliability and local efficiency}
In this section, we establish the global reliability and efficiency for the error indicators and estimator defined in
in (\ref{estimators:rho_l})--(\ref{estimators}) for the NC elements of the odd orders. Similar robust results for DG solutions can be proved in the same way.

Let 
\[
\osc(f,K)= \dfrac{h_K}{ \sqrt{\lambda_K}} \| f -f_{k-1}\|_{0,K}
\quad\mbox{and}\quad
\osc(f,\cT) =\left( \sum_{K \in \cT} \osc(f,K)^2\right)^{1/2}.
\]

\begin{thm} {\em (Global Reliability)}
Let $u_{\cT}$ be the nonconforming solution to \upshape{(\ref{NC solution})}.
There exist constants $C_r$ and $C$ that is independent of the jump of the coefficient 
such that
\beq\label{rel}
	\|A^{1/2} \nabla_h (u - u_{_\cT})\|_{0,\O} \le \eta_\sigma + C_r\, \eta_\rho 
	+ C\,\osc(f,\cT)  .
\eeq
\end{thm}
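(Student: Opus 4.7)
The plan is to apply Corollary~\ref{Reliability2} with $w = u_{\cT}$, which gives
\[
\|A^{1/2}\nabla_h(u - u_{\cT})\|^2 \le I_1 + I_2,
\]
where $I_1 = \inf_{\btau \in \S_f(\O)} \|A^{-1/2}\btau + A^{1/2}\nabla_h u_{\cT}\|^2$ is the conforming/flux infimum and $I_2 = \inf_{\bgamma \in \mathring{H}_D(\curll;\O)} \|A^{1/2}(\bgamma - \nabla_h u_{\cT})\|^2$ is the nonconforming/gradient infimum. Using $\sqrt{I_1 + I_2}\le\sqrt{I_1}+\sqrt{I_2}$, I would bound $\sqrt{I_1}$ by $\eta_\sigma + C\,\osc(f,\cT)$ and $\sqrt{I_2}$ by $C_r\,\eta_\rho$, and add.

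For $\sqrt{I_1}$, the recovered flux $\hat\bsigma_{\cT}$ lies in $\S_{f_{k-1}}(\O)$ rather than $\S_f(\O)$, so I would correct it by adding an element-wise perturbation with divergence $f - f_{k-1}$ on each $K$ and vanishing normal trace on $\partial K$. A natural realization is minus the gradient of the local Neumann solution $-\Delta w_K = f - f_{k-1}$ in $K$ with $\nabla w_K\cdot \bn_K = 0$ on $\partial K$, whose solvability is guaranteed by the compatibility $\int_K (f - f_{k-1})\,dx = 0$ built into the $L^2$ projection. The Poincar\'e--Wirtinger inequality then gives $\|\nabla w_K\|_{0,K} \le C h_K\|f - f_{k-1}\|_{0,K}$, so the correction has $A^{-1/2}$-norm $\le C\,\osc(f,K)$ on $K$; summing and applying the triangle inequality produces $\sqrt{I_1}\le \eta_\sigma + C\,\osc(f,\cT)$.

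For $\sqrt{I_2}$, Lemma~\ref{7.2} combined with Lemma~\ref{lem-nc-error} already yields $\sqrt{I_2} = \|A^{-1/2}\gperp\psi\| \le C_r\bigl(\sum_{F\in\cE_I\cup\cE_D}\lambda_F h_F^{-1}\|\jump{u_{\cT}}\|_{0,F}^2\bigr)^{1/2}$, so the remaining task is to show that this weighted jump residual is bounded by $C\,\eta_\rho$. Because $u_{\cT}$ lies in the nonconforming space of odd order $k$, the jump $\jump{u_{\cT}}|_F$ is orthogonal to $\mathbb{P}_{k-1}(F)$ and is therefore a scalar multiple $c_F L_{k,F}$. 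On the other hand, by the weighted-average construction the $L_{i,F}$-moments of $(\hat\brho_{\cT}-\nabla_h u_{\cT})\cdot \bt_F$ on $F$ are equal, up to the weight $\theta_F$ or $1-\theta_F$, to the $L_{i,F}$-moments of $\jump{\nabla u_{\cT}\cdot\bt_F} = \partial_{\bt_F}\jump{u_{\cT}} = c_F\,\partial_{\bt_F} L_{k,F}$; integration by parts on $F$ and the orthogonality of $\{L_{i,F}\}$ then reduce these moments to a multiple of $c_F$. A scaling/inverse argument on $K_F^{\pm}$ transfers this edge information into the desired element-wise $L^2$ bound on $\hat\brho_{\cT}-\nabla_h u_{\cT}$, yielding $\lambda_F h_F^{-1}\|\jump{u_{\cT}}\|_{0,F}^2 \lesssim \|A^{1/2}(\hat\brho_{\cT}-\nabla_h u_{\cT})\|_{0,K_F^+\cup K_F^-}^2$.

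The main obstacle is precisely this last edge-to-element comparison with a constant robust in the diffusion jump. The mechanism is that the weight $\theta_F = \Lambda_F^-/(\Lambda_F^-+\Lambda_F^+)$ and its complement must pair with the local eigenvalue ratio bound $\Lambda_K/\lambda_K \le \kappa$ so that every factor $\lambda_F/\Lambda_{K_F^{\pm}}$ appearing in the scaling is controlled only by $\kappa$; this is the same robustness mechanism exploited for residual estimators in \cite{CaHeZh:17}. The odd-order assumption is essential here because it collapses $\jump{u_{\cT}}|_F$ to the one-dimensional family spanned by $L_{k,F}$ and thereby makes the transfer from tangential moments to $L^2$ norms essentially sharp; for even $k$ the jump carries additional degrees of freedom that the weighted-edge recovery does not directly control, which is consistent with the authors' restriction to odd order.
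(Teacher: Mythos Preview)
Your overall decomposition via Corollary~\ref{Reliability2} into $I_1$ (flux term) and $I_2$ (gradient term) is exactly the structure the paper uses, and both halves of your argument are in principle correct, but each differs from the paper's execution in an instructive way.

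For $I_1$, the paper does \emph{not} correct $\hat\bsigma_{\cT}$ into a member of $\S_f(\O)$. Instead (Lemma~\ref{lem-rel-cf}) it invokes the two-dimensional Helmholtz identity of Lemma~\ref{7.2} to write $\sqrt{I_1}=\|A^{1/2}\nabla\phi\|$ exactly, and then computes
\[
\|A^{1/2}\nabla\phi\|^2=(A\nabla u+\hat\bsigma_{\cT},\nabla\phi)-(\hat\bsigma_{\cT}-\tilde\bsigma_{\cT},\nabla\phi)=(f-f_{k-1},\phi)-(\hat\bsigma_{\cT}-\tilde\bsigma_{\cT},\nabla\phi)
\]
directly by integration by parts, using only that $\nabla\!\cdot\hat\bsigma_{\cT}=f_{k-1}$. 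Your local Neumann correction is a legitimate alternative and has the merit of not depending on the 2D identity; the paper's route is shorter but is tied to the two-dimensional Helmholtz machinery.

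For $I_2$, you use Lemma~\ref{lem-nc-error} as the paper does, but your edge-to-element transfer is over-engineered. The paper (Lemma~\ref{lem-rel-nc}) simply observes that $\hat\brho_{\cT}\in H_D(\curll;\O)$ forces $\jump{\hat\brho_{\cT}\cdot\bt_F}=0$, hence
\[
\jump{\tilde\brho_{\cT}\cdot\bt_F}=\jump{(\tilde\brho_{\cT}-\hat\brho_{\cT})\cdot\bt_F},
\]
and then a trace/inverse inequality on each of $K_F^\pm$ together with $\lambda_F=\min(\lambda_F^+,\lambda_F^-)$ gives the robust bound immediately; the weight $\theta_F$ never enters. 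Your concern that robustness hinges on the specific weighted average and on the mechanism of \cite{CaHeZh:17} is misplaced here: that mechanism is what makes the \emph{efficiency} bound robust (Lemma~\ref{thm:effi:nc}), not the reliability. Likewise, the equivalence $\|\jump{u_{\cT}}\|_{0,F}\lesssim h_F\|\jump{\tilde\brho_{\cT}\cdot\bt_F}\|_{0,F}$ follows just from $\jump{u_{\cT}}|_F$ being a scalar multiple of $L_{k,F}$ (by the nonconforming moment conditions), so the odd-order hypothesis is not the operative ingredient in this particular step; it is needed earlier, for the explicit flux recovery.
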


\begin{proof}
The theorem is a direct result of Lemmas~\ref{lem-rel-cf} and \ref{lem-rel-nc}.
\end{proof}

Note that the global reliability bound in (\ref{rel}) does not
require the quasi-monotonicity assumption on the distribution of the diffusion coefficient $A(x)$.
The  reliability constant $C_r$ for the nonconforming error is independent of the jump of $A(x)$, but not equal to one.
This is due to the fact that the explicitly recovered gradient $\hat{\brho}_{_\cT}$ is not curl free.

In the following, we bound the conforming error above by the estimator $\eta_\sigma$
given in (\ref{estimators:cf_g}).

\begin{lem} \label{lem-rel-cf}
The global conforming error estimator, $\eta_\sigma$, given in {\em(\ref{estimators:cf_g})} is reliable, i.e.,
there exists a constant $C$ such that
\beq \label{global-reliability-cf}
	\inf_{ \btau \in \S_f(\O)} \|A^{1/2} ( \btau - \tilde \bsigma_{_\cT} )\| \le 
	 \eta_\sigma 
	 +C \,\osc(f,\cT) .
\eeq
\end{lem}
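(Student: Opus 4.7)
The plan is to exploit the fact that the recovered flux $\hat\bsigma_{_\cT}$ constructed in Section~\ref{odd-flux-recover} already lies in $\mathrm{RT}^{k-1}(\cT) \subset H(\divvr;\O)$, satisfies $\hat\bsigma_{_\cT}\cdot\bn = g$ on $\Gamma_N$, and nearly sits in $\S_f(\O)$ — the only defect being that its divergence equals $f_{k-1}$ rather than $f$. So the strategy is to produce an element of $\S_f(\O)$ by correcting $\hat\bsigma_{_\cT}$ with a divergence-fitting perturbation whose energy is controlled by the data oscillation.

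Concretely, I would first build, on each $K\in\cT$, a local corrector $\bdelta_K\in H(\divvr;K)$ satisfying $\nabla\cdot\bdelta_K = f-f_{k-1}$ in $K$ and $\bdelta_K\cdot\bn_K = 0$ on $\partial K$. Since $f-f_{k-1}$ is $L^2$-orthogonal to $\mathbb{P}_{k-1}(K)$ (in particular it has zero mean on $K$), such a $\bdelta_K$ exists, and by a standard scaling/Bogovskii-type argument one obtains
\[
\|\bdelta_K\|_{0,K} \le C\, h_K\, \|f-f_{k-1}\|_{0,K}.
\]
Gluing the $\bdelta_K$ into a global field $\bdelta$ keeps $\bdelta\in H(\divvr;\O)$ because $\bdelta\cdot\bn_K = 0$ on every $\partial K$, and also gives $\bdelta\cdot\bn = 0$ on $\Gamma_N$.

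Next, set $\bsigma^\star = \hat\bsigma_{_\cT} + \bdelta$. Then $\nabla\cdot\bsigma^\star = f_{k-1}+(f-f_{k-1}) = f$ in $\O$ and $\bsigma^\star\cdot\bn = g$ on $\Gamma_N$, so $\bsigma^\star\in\S_f(\O)$. Using $\bsigma^\star$ as a competitor and applying the triangle inequality,
\[
\inf_{\btau\in\S_f(\O)}\|A^{-1/2}(\btau-\tilde\bsigma_{_\cT})\|
\le \|A^{-1/2}(\hat\bsigma_{_\cT}-\tilde\bsigma_{_\cT})\| + \|A^{-1/2}\bdelta\|
= \eta_\sigma + \|A^{-1/2}\bdelta\|.
\]
Bounding the second term element-wise, $\|A^{-1/2}\bdelta\|_{0,K}^2 \le \lambda_K^{-1}\|\bdelta_K\|_{0,K}^2 \le C\, (h_K^2/\lambda_K)\|f-f_{k-1}\|_{0,K}^2 = C\,\osc(f,K)^2$; summing over $K$ yields $\|A^{-1/2}\bdelta\| \le C\,\osc(f,\cT)$, which finishes the estimate.

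The only nontrivial step is the local construction of $\bdelta_K$ with the scaling $\|\bdelta_K\|_{0,K}\lesssim h_K\|f-f_{k-1}\|_{0,K}$; this is where one invokes either a reference-element argument for the right inverse of the divergence on $\mathbb{P}_{k-1}(K)^\perp$, or equivalently picks $\bdelta_K = \nabla\phi_K$ with $-\Delta\phi_K = f_{k-1}-f$ in $K$, $\partial_n\phi_K = 0$ on $\partial K$, and uses elliptic regularity on shape-regular triangles. Everything else is triangle inequality and the construction of $\hat\bsigma_{_\cT}$. Note that no quasi-monotonicity of $A$ enters because the weight $\lambda_K^{-1}$ is already absorbed into the local definition of $\osc(f,K)$.
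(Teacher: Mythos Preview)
Your argument is correct and takes a genuinely different route from the paper. The paper does not build an explicit competitor in $\S_f(\O)$; instead it invokes the Helmholtz decomposition (Lemma~\ref{7.2}) to identify $\inf_{\btau\in\S_f(\O)}\|A^{-1/2}\btau+A^{1/2}\nabla_h u_{_\cT}\|$ with $\|A^{1/2}\nabla\phi\|$, where $\phi$ is the conforming component of $A\nabla_h(u-u_{_\cT})$. Then it expands $\|A^{1/2}\nabla\phi\|^2 = (A\nabla u+\hat\bsigma_{_\cT},\nabla\phi)-(\hat\bsigma_{_\cT}-\tilde\bsigma_{_\cT},\nabla\phi)$, uses integration by parts and $\hat\bsigma_{_\cT}\in\Sigma_f^{k-1}(\cT)$ to turn the first term into $(f-f_{k-1},\phi)$, and controls that by subtracting element means and applying Poincar\'e; the second term is handled by Cauchy--Schwarz. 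Your approach instead corrects the divergence defect $f-f_{k-1}$ locally via a Bogovskii/Neumann solve with zero normal trace, producing $\bsigma^\star=\hat\bsigma_{_\cT}+\bdelta\in\S_f(\O)$ directly, and then uses only the triangle inequality. Your route is more elementary (no Helmholtz decomposition, no duality with $\phi$) and is dimension-independent, whereas the paper's proof, through Lemma~\ref{7.2}, is tied to the two-dimensional setting; on the other hand, the paper's argument reuses machinery already in place for the generalized Prager--Synge identity and avoids constructing the auxiliary field $\bdelta$.
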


\begin{proof}
Let $\phi \in H_D^1(\O)$ be the conforming part of the Helmholtz decomposition of $u - u_{\cT}$.
By (\ref{HM:1}), integration by parts, and the assumption that $g|_F\in \mathbb{P}_{k-1}(F)$, we have 
\begin{equation} \label{rel:cf:a}
\begin{split}
& \inf_{\btau\in \S_f(\O)}\| A^{-1/2}\btau+ A^{1/2}\nabla_h u_\cT\|_{0,\O}^2\\
=&\|A^{1/2} \nabla \phi\|^2 
	=(A \nabla (u- u_{_\cT}), \nabla \phi) 
 =  (A \nabla u + \hat \bsigma_{_\cT}, \nabla \phi) 
  -( \hat \bsigma_{_\cT} - \tilde \bsigma_{_\cT}, \nabla \phi) \nonumber\\[2mm]
 &= (f- f_{k-1}, \phi) 
  - ( \hat \bsigma_{_\cT} - \tilde \bsigma_{_\cT}, \nabla \phi).
 \end{split}
   \end{equation}
Let $
	\bar \phi_K = \dfrac{1}{|K|} \int_K \phi \,dx.
$
It follows from the definitions of $f_{k-1}$ and the Cauchy-Schwarz and the Poincar\'{e} inequalities that
\begin{eqnarray*}
 && \sum_{K\in \cT} (f- f_{k-1}, \phi )_K 
=\sum_{K\in \cT} (f- f_{k-1}, \phi - \bar \phi_K)_K\\[2mm] 
    &\le&  C  \sum_{K\in \cT} \dfrac{h_K}{\lambda_K^{1/2} }
    \|f- f_{k-1}\|_{0,K} \| A^{1/2}\nabla \phi \|_{0,K} \\[2mm]
    &\le& C\,\osc(f,\cT) \|A^{1/2} \nabla \phi\|,
    \end{eqnarray*}
which, together with (\ref{rel:cf:a}) and the Cauchy-Schwartz inequality, leads to 
(\ref{global-reliability-cf}).
This completes the proof of the lemma.
\end{proof}

Since our recovered gradient is not in $\mathring{H}_D(\curll; \O)$, it is not straightforward to verify the reliability bound by Theorem \ref{Reliability}. However, it still plays a role in our reliability analysis.

\begin{lem} \label{lem-rel-nc}
The global nonconforming error estimator, $\eta_\rho$, given in {\em(\ref{estimators:rho_g})} is reliable, i.e.,
there exists a constant $C_r$ such that
\beq \label{rel:nc-b}
	\inf_{\btau \in \mathring{H}_D(\curll; \O)} \|A^{1/2} ( \btau - \nabla_h  u_{_\cT})\|
	\le C_r\, \eta_\rho.
\eeq
\end{lem}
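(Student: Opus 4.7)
The plan is to reduce the reliability bound to a weighted sum of edge jumps of $u_{_\cT}$ and then control that sum edge-by-edge by the local contribution to $\eta_\rho$. First, I apply Lemma~\ref{lem-nc-error} with $w=u_{_\cT}$; this reduces (\ref{rel:nc-b}) to the edge-jump estimate
\[
\sum_{F\in\cE_I\cup\cE_D} \lambda_F h_F^{-1}\|\jump{u_{_\cT}}\|_{0,F}^2 \;\le\; C\,\|A^{1/2}(\hat\brho_{_\cT}-\tilde\brho_{_\cT})\|^2 \;=\; C\,\eta_\rho^2.
\]

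Next, I exploit the orthogonality built into the nonconforming space. Since $u_{_\cT}\in\cU_D^k(\cT)$, for each $F\in\cE_I\cup\cE_D$ the polynomial $\jump{u_{_\cT}}|_F\in\mathbb{P}_k(F)$ is $L^2(F)$-orthogonal to $\mathbb{P}_{k-1}(F)$, hence $\jump{u_{_\cT}}|_F = c_F\,L_{k,F}$ for some constant $c_F$. Differentiating tangentially gives $\jump{\tilde\brho_{_\cT}\cdot\bt_F}|_F = c_F L'_{k,F}\in\mathbb{P}_{k-1}(F)$. Using the edge-recovery rule (\ref{rho-construction}) together with the continuity of $\hat\brho_{_\cT}\cdot\bt_F$ across any $F\in\cE_I$, a direct computation of the $L_{i,F}$-moments for $i=0,\ldots,k-1$ shows that the $\mathbb{P}_{k-1}(F)$-projections of $(\hat\brho_{K_F^-}-\tilde\brho_{K_F^-})\cdot\bt_F$ and $(\hat\brho_{K_F^+}-\tilde\brho_{K_F^+})\cdot\bt_F$ are $-(1-\theta_F)c_F L'_{k,F}$ and $\theta_F c_F L'_{k,F}$, respectively; an analogous one-sided identity with effective weight one holds for $F\in\cE_D$, using $S_{i,F}=0$ there.

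Since orthogonal projection is a contraction in $L^2(F)$ and $\theta_F^2+(1-\theta_F)^2\ge\tfrac12$ uniformly in $\theta_F\in[0,1]$, the two contributions above yield
\[
\tfrac12\, c_F^2 \|L'_{k,F}\|_{0,F}^2 \;\le\; \sum_{\pm} \|(\hat\brho_{K_F^\pm}-\tilde\brho_{K_F^\pm})\cdot\bt_F\|_{0,F}^2.
\]
The standard discrete trace inequality $\|p\|_{0,F}^2\le C h_F^{-1}\|p\|_{0,K_F^\pm}^2$ for polynomials, combined with the scalings $\|L_{k,F}\|_{0,F}^2\sim h_F$ and $\|L'_{k,F}\|_{0,F}^2\sim h_F^{-1}$, then turns this into
\[
\lambda_F h_F^{-1}\|\jump{u_{_\cT}}\|_{0,F}^2 \;\le\; C\Big(\frac{\lambda_F}{\lambda_F^-}\|A^{1/2}(\hat\brho_{_\cT}-\tilde\brho_{_\cT})\|_{0,K_F^-}^2 + \frac{\lambda_F}{\lambda_F^+}\|A^{1/2}(\hat\brho_{_\cT}-\tilde\brho_{_\cT})\|_{0,K_F^+}^2\Big).
\]
Since $\lambda_F=\min(\lambda_F^-,\lambda_F^+)$ gives $\lambda_F/\lambda_F^\pm\le 1$, the estimate is robust with respect to jumps of the diffusion coefficient. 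Summing over $F$ and using the finite overlap of the patches $\omega_F=K_F^-\cup K_F^+$ produces the bound displayed in the first paragraph and completes the proof.

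The main obstacle is the moment identification in the middle step: one must verify precisely that the $\mathbb{P}_{k-1}(F)$-projection of each tangential recovery error equals a $\theta_F$-weighted multiple of $c_F L'_{k,F}$, and observe that the algebraic lower bound $\theta_F^2+(1-\theta_F)^2\ge\tfrac12$ is exactly what protects robustness however strongly the diffusion contrast drives $\theta_F$ toward $0$ or $1$.
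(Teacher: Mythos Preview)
Your proof is correct, but it takes a somewhat different route from the paper after the common first step of invoking Lemma~\ref{lem-nc-error}. The paper's argument for bounding $\lambda_F h_F^{-1}\|\jump{u_{_\cT}}\|_{0,F}^2$ uses only the tangential continuity of $\hat\brho_{_\cT}\in H_D(\curll;\O)$: since $\jump{u_{_\cT}}|_F$ is proportional to $L_{k,F}$, a finite-dimensional norm-equivalence gives $h_F^{-1/2}\|\jump{u_{_\cT}}\|_{0,F}\le C h_F^{1/2}\|\jump{\tilde\brho_{_\cT}\cdot\bt_F}\|_{0,F}$, and then one simply writes $\jump{\tilde\brho_{_\cT}\cdot\bt_F}=\jump{(\tilde\brho_{_\cT}-\hat\brho_{_\cT})\cdot\bt_F}$ and applies the triangle, trace, and inverse inequalities. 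The specific weights $\theta_F$ never appear.

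You instead compute the tangential recovery error on each side explicitly (in effect re-deriving the identity that the paper uses later in the efficiency proof, Lemma~\ref{thm:effi:nc}) and then invoke the algebraic lower bound $\theta_F^2+(1-\theta_F)^2\ge\tfrac12$. This is a valid and self-contained argument; note that since $(\hat\brho_K-\tilde\brho_K)\cdot\bt_F|_F\in\mathbb{P}_{k-1}(F)$, your ``projection is a contraction'' step is actually an equality, which slightly simplifies matters. The trade-off: the paper's argument is shorter and applies to any recovered gradient in $H_D(\curll;\O)$, while your approach uses the explicit structure of the weighted-average recovery and makes the robustness mechanism with respect to the coefficient jump more transparent.
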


\begin{proof}
By Lemma~\ref{lem-nc-error}, to show the validity of (\ref{rel:nc-b}), it then suffices to prove that
\beq \label{rel:nc-b-1}
	\lambda_F^{1/2} h_F^{-1/2} \|\jump{u_{_\cT}}\|_{0,F} \le 
	C \| A^{1/2} (\hat \brho_{_\cT} - \tilde \brho_{_\cT})\|_{0,\o_F} 
\eeq
for all $F \in \cE_I \cup \cE_D$. Note that $\jump{u_{_\cT}}|_F$ is an odd function for all $F \in \cE_I$.
Hence,  $\left\| \jump{ \tilde \brho_{_\cT} \cdot \bt_F} \right\|_{0,F}=0$ implies
$\left\|\jump{u_{_\cT}}\right\|_{0,F}=0$.
By the equivalence of norms in a finite dimensional space and the scaling argument, we have that
\beq\label{7.8}
	h_F^{-1/2}\|\jump{u_{_\cT}}\|_{0,F} \le
	C\, h_F^{1/2}\left\| \jump{ \tilde \brho_{_\cT} \cdot \bt_F} \right\|_{0,F}.
\eeq
Since $\hat\brho_{_\cT}\in H_D(\curll; \O)$, it then 
follows from the triangle, the trace, and the inverse inequalities that
\begin{eqnarray*}
	\left\| \jump{ \tilde \brho_{_\cT} \cdot \bt_F} \right\|_{0,F} 
	&=&  \left\| \jump{ (\tilde \brho_{_\cT} -\hat \brho_{_\cT}) \cdot \bt_F} \right\|_{0,F}  
	\le  \left\|  (\tilde \brho_{_\cT} -\hat \brho_{_\cT})|_{K_F^+} \cdot \bt_F \right\|_{0,F}
	   + \left\|  (\tilde \brho_{_\cT} -\hat \brho_{_\cT})|_{K_F^-} \cdot \bt_F \right\|_{0,F}\\[2mm]
	&\le& C\, h_F^{-1/2} \left( 
	\left\|  \tilde \brho_{_\cT} -\hat \brho_{_\cT}\right\|_{0,\o_F} 
	+h_F \| \curlt (\hat \brho_{_\cT} - \tilde \brho_{_\cT})\|_{0,\o_F}
	\right)\\[2mm]
	&\le& C\,h_F^{-1/2}  \left\|  \tilde \brho_{_\cT} -\hat \brho_{_\cT} \right\|_{0,\o_F} 
	\le C\, \lambda_F^{-1/2} h_F^{-1/2} \left\| A^{1/2}\left(  \tilde \brho_{_\cT} -\hat \brho_{_\cT} \right)\right\|_{0,\o_F} 
\end{eqnarray*}
for all $F \in \cE_I$,
which, together with (\ref{7.8}), implies (\ref{rel:nc-b-1}) and, hence,  (\ref{rel:nc-b}).
In the case that $F \in \cE_D$, (\ref{rel:nc-b-1}) can be proved in a similar fashion.
This completes the proof of the lemma.
\end{proof}

\subsection{Local Efficiency}

In this section, we establish local efficiency of the indicators $\eta_{\sigma,K}$ and $\eta_{\rho,K}$ 
defined in (\ref{estimators:cf_l}) and (\ref{estimators:rho_l}), respectively.


\begin{thm} {\em (Local Efficiency)}
For each $K \in \cT$, there exists a positive constant $C_e$ that is independent of the mesh size and the jump of the coefficient 
such that
\beq\label{eff}
	\eta_K \le C_e\, \left( \|A^{1/2} \nabla_h (u-u_{_\cT})\|_{0,\o_K} + \osc(f,K)\right),
\eeq
where $\o_K$ is the union of all elements that shares at least an edge with $K$.
\end{thm}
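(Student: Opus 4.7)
The plan is to decouple the indicator via $\eta_K\le\eta_{\sigma,K}+\eta_{\rho,K}$ and establish efficiency of each piece separately. For $\eta_{\sigma,K}$ the bound will involve only the local energy error on $K$ plus $\osc(f,K)$, while for $\eta_{\rho,K}$ the bound will involve the true error on the edge-neighbor patch $\omega_K$, which reflects the non-local nature of the averaging-based gradient recovery.

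For $\eta_{\sigma,K}$, the first step is to observe that $(\hat\bsigma_{_\cT}-\tilde\bsigma_{_\cT})|_K$ lies in $RT^{k-1}(K)$ and, by \eqref{rt:flux:b}, has vanishing interior moments against $\mathbb{P}_{k-2}(K)^d$. A Piola-based scaling/norm-equivalence argument on the reference element therefore bounds $\|\hat\bsigma_K-\tilde\bsigma_K\|_{0,K}^2$ by $h_K$ times the sum of the $L^2(F)$-norms of its face normal traces. Inserting \eqref{rt:flux:a}, integrating by parts on $K$, and using the explicit form of $\phi_{i,F}$ from Lemma~\ref{edge-a} together with the $L^2(F')$-orthogonality of $\tilde\bsigma_K\cdot\bn_{F'}\in\mathbb{P}_{k-1}(F')$ to $L_{k,F'}$, I expect each face moment to collapse into the clean pairing $\mu_K(F)\|L_{i,F}\|_{0,F}^2(R_K,\phi_{i,F})_K$, where $R_K:=f+\gradt(A\nabla u_{_\cT})$ is the classical element residual. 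Splitting $R_K=(f-f_{k-1})+R_K^{(P)}$ sends the first part into oscillation, and the standard bubble-function argument applied to $R_K^{(P)}$ (testing the true equation $(A\nabla(u-u_{_\cT}),\nabla v)_K=(R_K,v)_K$ against $R_K^{(P)}b_K\in H^1_0(K)$) yields $\lambda_K^{-1/2}h_K\|R_K\|_{0,K}\lesssim\|A^{1/2}\nabla(u-u_{_\cT})\|_{0,K}+\osc(f,K)$, delivering the claimed bound.

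For $\eta_{\rho,K}$, the second row of \eqref{rho-construction} makes the $\mathbb{P}_{k-2}(K)^2$ interior moments of $(\hat\brho_{_\cT}-\tilde\brho_{_\cT})|_K\in N\!E^{k-1}(K)$ vanish, so the analogous scaling reduces efficiency to bounding edge tangential moments. The first row of \eqref{rho-construction} together with the definition of $S_{i,F}$ shows that on an interior edge $F\in\cE_I$ the tangential moment from the $K_F^-$ (resp.\ $K_F^+$) side equals $(1-\theta_F)$ (resp.\ $\theta_F$) times the $L_{i,F}$-moment of $\jump{\tilde\brho_{_\cT}\cdot\bt_F}$, with an analogous trace version on $\cE_D$ and zero contribution from $\cE_N$. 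Since $u_{_\cT}$ is piecewise polynomial, $\jump{\tilde\brho_{_\cT}\cdot\bt_F}=\partial_{\bt_F}\jump{u_{_\cT}}$, and an inverse inequality on $F$ converts this into a bound by $h_F^{-1}\|\jump{u_{_\cT}}\|_{0,F}$. Invoking then the known robust edge-jump efficiency $\lambda_F h_F^{-1}\|\jump{u_{_\cT}}\|_{0,F}^2\lesssim\|A^{1/2}\nabla_h(u-u_{_\cT})\|_{0,\omega_F}^2$ from \cite{CaHeZh:17} closes the estimate, provided the coefficient prefactor $\Lambda_K(1-\theta_F)^2$ (or $\Lambda_K\theta_F^2$) can be absorbed into $\lambda_F$.

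The principal obstacle is exactly this last absorption: robustness in the coefficient jump without quasi-monotonicity. The flux side is local to $K$ and only the ratio $\Lambda_K/\lambda_K\le\kappa$ enters, which the Verf\"urth argument handles automatically. The gradient side is the delicate one because averaging across $F$ mixes $\Lambda_F^+$ and $\Lambda_F^-$ that may be wildly different; the harmonic-type weight $\theta_F=\Lambda_F^-/(\Lambda_F^-+\Lambda_F^+)$, combined with the elementary AM-GM bound $\Lambda_F^-\Lambda_F^+/(\Lambda_F^-+\Lambda_F^+)^2\le 1/4$, is precisely what is needed so that $\Lambda_K(1-\theta_F)^2\lesssim\kappa\,\lambda_F$ uniformly in the jump. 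Verifying this balance case-by-case on whether $\lambda_F=\lambda_F^-$ or $\lambda_F=\lambda_F^+$, and assembling the two efficiency bounds by squaring and summing, completes the proof of \eqref{eff}.
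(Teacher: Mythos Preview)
Your plan is correct and mirrors the paper's structure: the theorem is reduced to separate efficiency bounds for $\eta_{\sigma,K}$ (the paper's Lemma~\ref{thm:effi:cf}) and $\eta_{\rho,K}$ (Lemma~\ref{thm:effi:nc}). The execution details differ, however. For $\eta_{\sigma,K}$ the paper does not work Legendre-moment by Legendre-moment with the $\phi_{i,F}$'s; instead it constructs, for each face $F$, a single auxiliary function $\tilde w_F\in\mathbb{P}_k(K)$ whose trace on $\partial K$ is tailored so that $\int_{\partial K}(\hat\bsigma_K-\tilde\bsigma_K)\cdot\bn_K\,\tilde w_F\,ds=\|w_F\|_{0,F}^2$, applies the identity \eqref{equi:b} to collapse this to $(R_K,\tilde w_F)_K$, and proves a separate technical lemma bounding $\|\tilde w_F\|_{0,K}\lesssim h_F^{1/2}\|w_F\|_{0,F}$. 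Your route---computing each moment of $w_F$ directly from \eqref{rt:flux:a}, integrating by parts, and using the orthogonality of $\tilde\bsigma_K\cdot\bn_{F'}\in\mathbb{P}_{k-1}(F')$ to $L_{k,F'}$---reaches the same element-residual bound and avoids that auxiliary lemma, at the price of handling the $k$ moments separately. For $\eta_{\rho,K}$ the paper stays with the tangential-gradient jump $\|\jump{\tilde\brho_{_\cT}\cdot\bt_F}\|_{0,F}$ throughout and invokes its robust efficiency with the harmonic-type prefactor $\bigl(\Lambda_F^-\Lambda_F^+/(\Lambda_F^-+\Lambda_F^+)\bigr)^{1/2}$ (the inequalities \eqref{effi:a-3}), whereas you detour through the solution jump $\|\jump{u_{_\cT}}\|_{0,F}$ via an inverse inequality and invoke its efficiency with the prefactor $\lambda_F^{1/2}$. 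Both choices work because the local eigenvalue-ratio bound $\Lambda_K/\lambda_K\le\kappa$ makes the two prefactors comparable; your case-by-case verification that $\Lambda_K(1-\theta_F)^2\lesssim\kappa\,\lambda_F$ is precisely the content of \eqref{effi:a-3} combined with $\min(\Lambda_F^-,\Lambda_F^+)\le\kappa\,\lambda_F$.
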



\begin{proof}
(\ref{eff}) is a direct consequence of Lemmas~\ref{thm:effi:cf}
and \ref{thm:effi:nc}.
\end{proof}

Note that the local efficiency bound in (\ref{eff}) holds regardless  the distribution of the diffusion coefficient $A(x)$.

\subsection{Local Efficiency for $\eta_{\sigma,K}$}

To establish local efficiency bound of $\eta_{\sigma,K}$, we introduce some auxiliary functions defined  
locally in $K$. To this end, for each edge $F\in \cE_K$, denote by $F'$ and $F''$ the other
two edges of $K$ such that $F, F'$, and $F''$ form counter-clockwise orientation. Without loss 
of generality, assume that $\mu_K \equiv 1$ on $\cE_K$. Let
\begin{equation}\label{w-F}
	w_\sF = \big(\hat \bsigma_K -\tilde \bsigma_{K} \big) \cdot \bn_K|_\sF \in \mathbb{P}_{k-1}(F),
	\quad a_\sF=w_\sF(\bs_\sF), \quad
	\mbox{and}  \quad
	b_\sF=w_\sF(\be_\sF).
\end{equation}
Define the auxiliary function corresponding to $F$, $\tilde w_\sF \in \mathbb{P}_{k}(K)$, such that
	\[
		\int_K \tilde w_\sF \, P_{j,K} \,dx=0, \quad \forall \, j=1, \cdots, m_k
	\]
	 and
	\[	
	\tilde w_\sF|_\sF = w_\sF + \gamma_\sF L_{k,F}, \quad
	\tilde w_\sF|_{_{F'}} = - \beta_{_{F}}L_{{k,F'}},
	\quad \mbox{and} \quad
	\tilde w_\sF|_{_{F''}} =\beta_{_{F}} L_{{k,F''}},
	\]
where $\gamma_\sF =\dfrac{a_\sF - b_\sF}{2}$ and $\beta_\sF =  \dfrac{a_\sF+b_\sF}{2}$.

\begin{lem}
For each $F \in \cE_K$, there exists a positive constant $C$ 
such that
\beq \label{effi:1}
	\|\tilde w_\sF\|_{0,K} \le C \,h_F^{1/2} \|w_\sF\|_{0,F}.
\eeq
\end{lem}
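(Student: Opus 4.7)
The plan is to prove the bound by the standard scaling/reference-element argument, reducing everything to a norm-equivalence statement on a fixed finite-dimensional polynomial space on the reference triangle $\hat K$.

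First, I would verify that $\tilde w_F$ is well defined as an element of $\mathbb{P}_k(K)$. The prescribed values on the three edges agree at the two vertices of $F$ (both equal to $\beta_F$, as can be checked using $L_{k,F}(e_F)=1$ and $L_{k,F}(s_F)=-1$ for odd $k$), and they match the $\pm\beta_F L_{k,F'}$, $\pm\beta_F L_{k,F''}$ values at the remaining vertices by the same parity argument together with the counter-clockwise orientation convention. A dimension count then shows the total number of degrees of freedom $3(k+1)-3+m_k=(k+1)(k+2)/2$ coincides with $\dim\mathbb{P}_k(K)$, so $\tilde w_F$ is uniquely determined; in fact these are exactly the degrees of freedom of $\cU^k(\cT)|_K$.

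Next, I would affine-map $K$ onto a fixed reference triangle $\hat K$ with $F$ mapped to an edge $\hat F$. Under this pullback, the space of polynomials of degree $k$ on $K$ is isomorphic to $\mathbb{P}_k(\hat K)$, and the edge scalars transform consistently: $\|v\|_{0,K}^2\sim h_K^2\|\hat v\|_{0,\hat K}^2$ and $\|w\|_{0,F}^2\sim h_F\|\hat w\|_{0,\hat F}^2$. Write $\hat w_F,\hat{\tilde w}_F$ for the pullbacks. On the fixed finite-dimensional space $\mathbb{P}_{k-1}(\hat F)$ all norms are equivalent, so the vertex values $a_F=\hat w_F(\hat s_F)$ and $b_F=\hat w_F(\hat e_F)$, and hence $\beta_F$ and $\gamma_F$, satisfy $|a_F|+|b_F|+|\beta_F|+|\gamma_F|\le C\|\hat w_F\|_{0,\hat F}$.

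Now on $\hat K$, the mapping $\hat w_F\mapsto \hat{\tilde w}_F$ is a linear map between two finite-dimensional spaces: its arguments are $\hat w_F$ and the scalars $\beta_F$, $\gamma_F$ (which themselves are linear functionals of $\hat w_F$), and its image is the polynomial in $\mathbb{P}_k(\hat K)$ characterized by vanishing interior moments against $\{P_{j,\hat K}\}$ and the prescribed edge data. By norm equivalence on this fixed space,
\[
\|\hat{\tilde w}_F\|_{0,\hat K}\;\le\; C\bigl(\|\hat w_F\|_{0,\hat F}+|\beta_F|+|\gamma_F|\bigr)\;\le\; C\|\hat w_F\|_{0,\hat F}.
\]
Scaling back and using shape regularity $h_K\sim h_F$ gives
\[
\|\tilde w_F\|_{0,K}^2\;\le\; C\,h_K^2\|\hat{\tilde w}_F\|_{0,\hat K}^2\;\le\; C\,h_K^2\|\hat w_F\|_{0,\hat F}^2\;\le\; C\,\frac{h_K^2}{h_F}\|w_F\|_{0,F}^2\;\le\; C\,h_F\|w_F\|_{0,F}^2,
\]
which is (\ref{effi:1}). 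The only subtle point is checking that the edge data for $\tilde w_F$ on the reference triangle really match continuously at the vertices so that the reference map is well defined; this is the parity computation with the Gauss--Legendre polynomials described in the first paragraph, and it is the one place where oddness of $k$ is used. Apart from that, the argument is entirely standard scaling plus norm equivalence on a finite-dimensional space.
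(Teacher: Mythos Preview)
Your argument is correct and takes a more abstract route than the paper's. The paper also relies on finite-dimensional norm equivalence but proceeds explicitly on the physical element: it first bounds the scalars $\gamma_F$ and $\beta_F$ directly by writing $\gamma_F=-\tfrac12\int_F w_F'\,ds$ and applying Cauchy--Schwarz together with the inverse inequality to get $|\gamma_F|\le C h_F^{-1/2}\|w_F\|_{0,F}$, and bounds $|\beta_F|$ via a Poincar\'e-type estimate (noting that $w_F-\beta_F$ has a zero on $F$) plus the inverse inequality. It then estimates $\|\tilde w_F\|_{0,F'}$ on each of the three edges separately, and finally invokes the norm equivalence $\|\tilde w_F\|_{0,K}\le C\sum_{F'\in\cE_K}h_{F'}^{1/2}\|\tilde w_F\|_{0,F'}$, justified by the fact that the vanishing interior moments force $\tilde w_F\equiv 0$ whenever its boundary trace vanishes. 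Your reference-element argument compresses all of this into a single linearity-plus-norm-equivalence step; it is cleaner and entirely standard, while the paper's version is more hands-on and makes the individual bounds on $\beta_F$ and $\gamma_F$ visible.
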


\begin{proof}	
By the Cauchy-Schwarz and the inverse inequalities, we have
\beq \label{effi:a1}
	\big| \gamma_\sF \big|= \Big| \dfrac{1}{2} \int_F w'_\sF \,ds \Big|
	 \le  \dfrac{h_F^{1/2}}{2}    \|w'_\sF\|_{0,F} 
	 \le C h_F^{-1/2} \|w_\sF\|_{0,F}.
\eeq
Approximation property and the inverse inequality give
\[
	\|w_\sF - \beta_\sF\|_{0,F} \le C h_F \|w'_\sF\|_{0,F} \le C \|w_\sF\|_{0,F},
\]
which, together with the triangle inequality, gives
\beq \label{effi:b1}
	|\beta_\sF| = h_F^{-1/2} \|\beta_\sF\|_{0,F} \le
	h_F^{-1/2} \big(  \|w_\sF - \beta_\sF\|_{0,F} + \| w_\sF\|_{0,F}  \big)
	\le C\, h_F^{-1/2} \| w_\sF\|_{0,F}  .
\eeq
Since $\|L_{k,F}\|_{0,F} \le h_F^{1/2}$ for all $F \in \cE_K$, by (\ref{effi:a1}) and (\ref{effi:b1}), we have that
\[
	\|\tilde w_\sF\|_{0,F} = \left(  \|w_\sF\|_{0,F}^2 +  \gamma_\sF^2\|L_{k,F}\|_{0,F}^2 \right)^{1/2}
	\le	C\, \|w_\sF\|_{0,F}
\]
and that
\[
	\|\tilde w_\sF\|_{0,F'} \le h_{_{F'}}^{1/2} |\beta_\sF| \le C\, \| w_\sF\|_{0,F} \quad
	\mbox{and} \quad
	\|\tilde w_\sF\|_{0,F''} \le h_{_{F''}}^{1/2} |\beta_\sF| \le C\, \| w_\sF\|_{0,F}.
\]
Now (\ref{effi:1}) is a direct consequence of the fact that
\[
	\|\tilde w_\sF\|_{0,K} \le C\, \sum_{F' \in \cE_K} h_{_{F'}}^{1/2} \|\tilde w_\sF\|_{0,F'}
\]
which follows from the equivalence of norms in a finite dimensional space, and the fact that
$\|\tilde w_\sF\|_{\partial K}=0$ implies $ \|\tilde w_\sF\|_K=0$. This completes
the proof of the lemma.
\end{proof}

\begin{lem}\label{thm:effi:cf}
There exists a positive constant $C$ 
such that
\beq \label{effi}
	\eta_{\sigma,K} \le C\, \left( \| A^{1/2} \nabla_h (u - u_{_\cT})\|_{0,K} +\osc(f,K)  \right) ,\quad \forall \, K \in \cT.
\eeq
\end{lem}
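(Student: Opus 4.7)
My plan is to control $\eta_{\sigma,K} = \|A^{-1/2}(\hat{\bsigma}_K - \tilde{\bsigma}_K)\|_{0,K}$ in three steps: first, reduce the $L^2(K)$ norm of $\hat{\bsigma}_K - \tilde{\bsigma}_K$ to the edge normal traces $w_F$ from (\ref{w-F}); second, use the auxiliary function $\tilde{w}_F$ to express each $\|w_F\|_{0,F}^2$ as an interior integral against the element residual $f - \nabla\cdot\tilde{\bsigma}_K$; and third, bound that residual by a classical interior bubble-function argument. For the first step, note that $\tilde{\bsigma}_K = -A\nabla u_{_\cT}|_K \in \mathbb{P}_{k-1}(K)^d \subset RT^{k-1}(K)$, so the defining relation (\ref{rt:flux:b}) gives $\int_K(\hat{\bsigma}_K - \tilde{\bsigma}_K)\cdot\bzeta\,dx = 0$ for every $\bzeta \in \mathbb{P}_{k-2}(K)^d$. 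Equivalence of norms on $RT^{k-1}(K)$ together with a standard affine scaling argument then yields
\[
  \eta_{\sigma,K}^2 \;\le\; \lambda_K^{-1}\,\|\hat{\bsigma}_K - \tilde{\bsigma}_K\|_{0,K}^2 \;\le\; C\,\lambda_K^{-1}\sum_{F \in \cE_K} h_F\,\|w_F\|_{0,F}^2.
\]

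For the second step, I would apply the equilibrium identity (\ref{equi:b}) with $q = \tilde{w}_F \in \mathbb{P}_k(K)$ and subtract the plain integration-by-parts identity $\int_{\partial K}\tilde{\bsigma}_K\cdot\bn_K\,\tilde{w}_F\,ds = \int_K \tilde{\bsigma}_K\cdot\nabla\tilde{w}_F\,dx + \int_K(\nabla\cdot\tilde{\bsigma}_K)\tilde{w}_F\,dx$, obtaining
\[
  \int_{\partial K}(\hat{\bsigma}_K - \tilde{\bsigma}_K)\cdot\bn_K\,\tilde{w}_F\,ds \;=\; \int_K (f - \nabla\cdot\tilde{\bsigma}_K)\,\tilde{w}_F\,dx.
\]
The prescribed boundary values of $\tilde{w}_F$ are tailored so that the left-hand side collapses to $\|w_F\|_{0,F}^2$: on $F$ the $\gamma_F L_{k,F}$ contribution is killed by the orthogonality of $L_{k,F}$ to $\mathbb{P}_{k-1}(F)$ since $w_F \in \mathbb{P}_{k-1}(F)$, and on each of the two remaining edges $F'$ the restriction $\tilde{w}_F|_{F'}$ is a scalar multiple of $L_{k,F'}$, which is likewise orthogonal to $w_{F'} \in \mathbb{P}_{k-1}(F')$. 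Combined with the already-proved bound (\ref{effi:1}) this yields $\|w_F\|_{0,F} \le C\,h_F^{1/2}\,\|f - \nabla\cdot\tilde{\bsigma}_K\|_{0,K}$.

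For the third step, split $f - \nabla\cdot\tilde{\bsigma}_K = (f - f_{k-1}) + (f_{k-1} - \nabla\cdot\tilde{\bsigma}_K)$; the first piece feeds directly into $\osc(f,K)$. For the polynomial residual $r := f_{k-1} - \nabla\cdot\tilde{\bsigma}_K \in \mathbb{P}_{k-1}(K)$, I would test $v = r\,b_K \in H_0^1(K)$ in the local identity $\int_K A\nabla(u - u_{_\cT})\cdot\nabla v\,dx = \int_K (f - \nabla\cdot\tilde{\bsigma}_K)\,v\,dx$, which follows from (\ref{pde}), the relation $\tilde{\bsigma}_K = -A\nabla u_{_\cT}|_K$, and integration by parts on $K$. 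Bubble-function norm equivalence on $\mathbb{P}_{k-1}(K)$ together with the inverse inequality then yields $h_K\|r\|_{0,K} \le C\Lambda_K^{1/2}\|A^{1/2}\nabla_h(u-u_{_\cT})\|_{0,K} + Ch_K\|f-f_{k-1}\|_{0,K}$. Dividing by $\lambda_K^{1/2}$ and invoking the local spectral ratio $\Lambda_K/\lambda_K \le \kappa$ to absorb the coefficient factor into the energy norm delivers (\ref{effi}). The one genuinely delicate point is step two: recognizing that $\tilde{w}_F$ was engineered precisely so the boundary integrals cancel except on $F$; once that identity is in place, the rest is a composition of standard a~posteriori tools.
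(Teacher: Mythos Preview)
Your proposal is correct and follows essentially the same route as the paper: the reduction to edge normal traces via (\ref{rt:flux:b}) and norm equivalence, the use of $\tilde w_F$ together with (\ref{equi:b}) and Legendre orthogonality to obtain $\|w_F\|_{0,F}^2=\int_K(f-\nabla\!\cdot\tilde\bsigma_K)\,\tilde w_F\,dx$, and then (\ref{effi:1}). The only cosmetic difference is that the paper invokes the known element-residual efficiency bound from \cite{BeVe:00} at the end, whereas you spell out that bound via the element bubble $b_K$ and the local spectral ratio $\Lambda_K/\lambda_K\le\kappa$; this is precisely the argument underlying the cited result, so the two proofs coincide.
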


\begin{proof}
According to (\ref{rt:flux:b}), it is easy to see that
$\| \left(\hat \bsigma_K-\tilde \bsigma_{K} \right)\cdot \bn_F \|_{0,F}=0$ for all $F \in \cE_K$ implies that
$\| \hat \bsigma_K-\tilde \bsigma_K \|_{0,K} =0$. Hence, by the equivalence of norms in a finite dimensional
space, we have that
\beq\label{effi:1-a}
	\| \hat \bsigma_K-\tilde \bsigma_K \|_{0,K} \le C\,
	\sum_{F \in \cE_K} h_F^{1/2} \| \left(\hat \bsigma_K-\tilde \bsigma_K \right)\cdot \bn_F \|_{0,F}
	\le C \sum_{F \in \cE_K} h_F^{1/2} \| w_\sF \|_{0,F},
\eeq
where $w_\sF$ is defined in (\ref{w-F}).
By the orthogonality property of $\{L_{j,F}\}_{j=0}^k$ and 
	the definition of $\tilde w_\sF$, we have
	\[
		\left\|w_\sF \right\|_{0,F}^2 =
	\int_{\partial K} (\hat \bsigma_K -\tilde \bsigma_K) \cdot \bn \,\tilde w_\sF \,ds.
	\]
	It then follows from (\ref{equi:b}), integration by parts, the Cauchy-Schwarz inequality, and
	(\ref{effi:1}) that
		\begin{eqnarray*}
	\left\|w_\sF \right\|_{0,F}^2 
	&=& \int_K \tilde \bsigma_K \cdot \nabla \tilde w_\sF \,dx + \int_K f\, \tilde w_\sF \,dx
	-\int_K  \tilde \bsigma_K \cdot \nabla \tilde w_\sF \,dx 
	- \int_K \left( \nabla \cdot \tilde \bsigma_K \right)\, \tilde w_\sF dx\\[2mm]
	&=& \int_K \mathlarger(f - \nabla \cdot \tilde \bsigma_K\mathlarger) \,\tilde w_\sF \,dx
	\le  C\, h_F^{1/2}\large\|f - \nabla \cdot \tilde \bsigma_K \large\|_{0,K} \|w_\sF\|_{0,F},
	\end{eqnarray*}
	which implies
	\[
		\left\|w_\sF \right\|_{0,F} \le C h_F^{1/2}\large\|f - \nabla \cdot \tilde \bsigma_K \large\|_{0,K}.
	\]
	Together with (\ref{effi:1-a}), we have
	\[
		\eta_{\sigma,K}  \le  \lambda_K^{-1/2}\| \hat \bsigma_K-\tilde \bsigma_K \|_{0,K} \le
		 C\,\dfrac{h_K}{ \sqrt{\lambda_K}} \,
		\large\|f - \nabla \cdot \tilde \bsigma_K \large\|_{0,K}.
	\]
Now (\ref{effi}) is a direct consequence of the following efficiency bound of 
the element residual 
(see, e.g., \cite{BeVe:00}):
\[
	\dfrac{h_K}{\sqrt{\lambda_K}} \large\|f- \nabla \cdot \tilde \bsigma_K\|_K 
	\le C \left(	\big\|A^{1/2}  \nabla (u - u_{_\cT}) \big\|_{0,K}
	+ \dfrac{h_K}{\sqrt{\lambda_K}} \big\| f - f_{k-1}\big\|_{0,K } \right).
\]
This completes the proof of the theorem.

\end{proof}

\subsection{Local Efficiency for $\eta_{\rho,K}$}

In this section, we establish local efficiency bound for the nonconforming error indicator $\eta_{\rho,K}$ defined in
(\ref{estimators:rho_l}).

\begin{lem}\label{thm:effi:nc}
There exists a positive constant $C$ that is independent of the mesh size and the jump of the coefficient 
such that
\beq \label{effi:grad}
	\eta_{\rho,K} \le C \,\| A^{1/2} \nabla_h (u-u_{_\cT}) \|_{0,\o_K}, \quad \forall \, K \in \cT.
\eeq
\end{lem}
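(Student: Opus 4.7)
The plan is to reduce $\eta_{\rho,K}^2=\lambda_K^{-1}\|A^{1/2}(\hat\brho_K-\tilde\brho_K)\|_{0,K}^2$ to an expression purely in terms of jumps of the tangential numerical gradient across the edges of $K$, and then bound those jumps by the local energy error via a bubble-function argument. First, by construction (\ref{rho-construction}), the interior moments of $\hat\brho_K - \tilde\brho_K \in N\!E^{k-1}(K)$ against any $\bzeta \in \mathbb{P}_{k-2}(K)^2$ vanish. Combining the unisolvence of the degrees of freedom of $N\!E^{k-1}(K)$ with a standard affine scaling argument yields the norm equivalence
\[
\|\hat\brho_K-\tilde\brho_K\|_{0,K}^2 \le C\sum_{F\in\cE_K} h_F \,\|(\hat\brho_K-\tilde\brho_K)\cdot\bt_F\|_{0,F}^2,
\]
so $\eta_{\rho,K}^2 \le C\lambda_K\sum_{F\in\cE_K} h_F \,\|(\hat\brho_K-\tilde\brho_K)\cdot\bt_F\|_{0,F}^2$.

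Next I will unpack the edge contributions. Since $(\hat\brho_K-\tilde\brho_K)\cdot\bt_F \in \mathbb{P}_{k-1}(F)$ is determined by its moments against $\{L_{i,F}\}_{i=0}^{k-1}$, a direct calculation from (\ref{rho-construction}) gives, on each interior edge,
\[
(\hat\brho_K-\tilde\brho_K)\cdot\bt_F\big|_F = \pm\,\omega_F^K\,\jump{\nabla_h u_\cT\cdot\bt_F}\big|_F,
\qquad \omega_F^K=\frac{\Lambda_F^{\pm}}{\Lambda_F^-+\Lambda_F^+},
\]
with the sign of $\pm$ chosen by whether $K=K_F^-$ or $K=K_F^+$. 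The assumption $\Lambda_K/\lambda_K\le\kappa$ together with an elementary case analysis (splitting on whether $\lambda_F^-\le\lambda_F^+$ or $\lambda_F^->\lambda_F^+$) yields the robust weight bound $\lambda_K(\omega_F^K)^2\le C\lambda_F$. For $F\in\cE_K\cap\cE_D$, the equality $S_{i,F}=0$ forces $(\hat\brho_K-\tilde\brho_K)\cdot\bt_F=-\partial_\bt u_\cT|_K$, which together with $u=0$ on $\Gamma_D$ gives $\partial_\bt u_\cT|_K=\partial_\bt\jump{u_\cT}|_F$; edges in $\cE_K\cap\cE_N$ contribute nothing because then $(\hat\brho_K-\tilde\brho_K)\cdot\bt_F=0$. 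After collecting these pieces I will have reduced the estimator to a weighted sum over edges of the form $\lambda_F h_F \|\jump{\nabla_h u_\cT\cdot\bt_F}\|_{0,F}^2$.

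The main obstacle is turning each such term into the local energy error in a robust way. Here I will exploit the defining property of the nonconforming space $\cU^k_D(\cT)$: since $\int_F\jump{u_\cT}\,p\,ds=0$ for every $p\in\mathbb{P}_{k-1}(F)$, the polynomial $\jump{u_\cT}|_F\in\mathbb{P}_k(F)$ lies in the one-dimensional span of $L_{k,F}$, so
\[
\|\jump{\nabla_h u_\cT\cdot\bt_F}\|_{0,F}=\|\partial_\bt\jump{u_\cT}\|_{0,F}\le C\,h_F^{-1}\|\jump{u_\cT}\|_{0,F}.
\]
It then remains to establish the robust jump-efficiency bound
\[
\lambda_F h_F^{-1}\|\jump{u_\cT}\|_{0,F}^2 \le C\,\|A^{1/2}\nabla_h(u-u_\cT)\|_{0,\omega_F}^2, \qquad \forall\, F\in\cE_I\cup\cE_D.
\]
Using $\jump{u_\cT}=\jump{u_\cT-u}$ (because $u\in H^1_D(\Omega)$), I will attack this via a weighted edge-bubble/extension argument in the spirit of \cite{CaHeZh:17}: lift a scaled, coefficient-weighted multiple of $\jump{u_\cT}$ into $\omega_F$ using an edge bubble supported in the element with smaller $\lambda$-value, apply integration by parts on each side of $F$, and invoke the orthogonality $\jump{u_\cT}\perp\mathbb{P}_{k-1}(F)$ to remove the leading-order polynomial part of $\nabla(u-u_\cT)$ and recover the sharp $h_F^{-1}$ scaling. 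The coefficient-weighted choice of bubble is precisely what keeps the constant independent of the jumps of $A$. Combining this with the weight bound $\lambda_K(\omega_F^K)^2\le C\lambda_F$ from the second paragraph and summing over $F\in\cE_K$ yields (\ref{effi:grad}).
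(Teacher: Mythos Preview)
Your argument follows the same skeleton as the paper's proof: the norm equivalence reducing $\|\hat\brho_K-\tilde\brho_K\|_{0,K}$ to edge contributions, the identification of $(\hat\brho_K-\tilde\brho_K)\cdot\bt_F$ as a weighted tangential jump, and a robust weight bound. The only substantive difference is the last step. The paper stays with the tangential-gradient jump throughout, obtaining the harmonic-mean weight via the elementary inequalities $(\Lambda_F^\pm)^{1/2}\theta_F^\mp\le(\Lambda_F^-\Lambda_F^+/(\Lambda_F^-+\Lambda_F^+))^{1/2}$ and then simply citing the known efficiency bound
\[
\left(\frac{\Lambda_F^-\Lambda_F^+}{\Lambda_F^-+\Lambda_F^+}\right)^{1/2} h_F^{1/2}\big\|\jump{\tilde\brho_\cT\cdot\bt_F}\big\|_{0,F}\le C\,\|A^{1/2}\nabla_h(u-u_\cT)\|_{0,\omega_F}.
\]
You instead pass through the solution jump via the inverse estimate $\|\jump{\nabla_h u_\cT\cdot\bt_F}\|_{0,F}\le Ch_F^{-1}\|\jump{u_\cT}\|_{0,F}$ (valid because $\jump{u_\cT}|_F$ is a multiple of $L_{k,F}$) and then argue the efficiency of $\lambda_F^{1/2}h_F^{-1/2}\|\jump{u_\cT}\|_{0,F}$. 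Both routes reach the same conclusion; yours is a little longer and buys nothing extra.

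Two minor issues. First, the opening equality $\eta_{\rho,K}^2=\lambda_K^{-1}\|A^{1/2}(\hat\brho_K-\tilde\brho_K)\|_{0,K}^2$ is a typo; there is no $\lambda_K^{-1}$. Second, your sketch of the final jump-efficiency bound is muddled: a bubble supported in a \emph{single} element cannot produce the two-sided jump by ``integration by parts on each side of $F$'', and ``removing the leading-order polynomial part of $\nabla(u-u_\cT)$'' is not how the orthogonality $\jump{u_\cT}\perp\mathbb{P}_{k-1}(F)$ enters. A clean way to finish your route is to use only the zeroth-moment orthogonality $\int_F\jump{u_\cT}\,ds=0$, subtract the edge average of $(u-u_\cT)|_{K_F^\pm}$ on each side, and apply the scaled trace--Poincar\'e inequality on $K_F^-$ and $K_F^+$ separately; since $\lambda_F=\min(\lambda_F^-,\lambda_F^+)$, the resulting constant is automatically robust.
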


\begin{proof}
By (\ref{rho-construction}), it is easy to see that 
 $\| \left(\hat \brho_{K} - \tilde  \brho_{K} \right) \cdot \bt_F\|_{0,F}=0$ for all $F\in \cE_K$ implies
that $\|\hat \brho_{K} - \tilde  \brho{_K}\|_{0,K}=0$.
By the equivalence of norms in a finite dimensional space and the scaling argument,
we have
\beq \label{effi:a-4}
	\left\| \hat \brho_{K} - \tilde  \brho_{K}   \right\|_{0,K} \le C
	\sum_{F \in \cE_K} h_F^{1/2} \left\| \left(\hat \brho_{K} - \tilde  \brho_{K} \right)  \cdot \bt_F\right\|_{0,F}.
\eeq
Without loss of generality, assume that $K$ is an interior element. By (\ref{rho-construction}),
a direct calculation gives
\beq \label{effi:a-1}
	 \left(\hat \brho_{K} - \tilde \brho_K \right) \big|_F \cdot \bt_F =
	\left\{ 
	\begin{array}{lll}
	(\theta_F-1) \jump{ \tilde \brho \cdot \bt_F}|_\sF, & \mbox{if } & K=K_F^-,\\[4mm]
	\theta_\sF \jump{ \tilde \brho \cdot \bt_F}|_F,
	 & \mbox{if } & K=K_F^+
	 \end{array}
	 \right.
\eeq
for all $F \in \cE_K$.
It is also easy to verify that 
\beq  \label{effi:a-3}
	\left( \Lambda_F^- \right)^{1/2}  (1-\theta_F) \le 
	\left( \dfrac{ \Lambda_F^-   \Lambda_F^+ }{ \Lambda_F^- + \Lambda_F^+} \right)^{1/2} \quad
	\mbox{and} \quad
	\left( \Lambda_F^+ \right)^{1/2}  \theta_F \le 
	\left( \dfrac{ \Lambda_F^-   \Lambda_F^+ }{ \Lambda_F^- + \Lambda_F^+} \right)^{1/2}.
\eeq
Combining (\ref{effi:a-4}), (\ref{effi:a-1}), and (\ref{effi:a-3}) gives
\beq \label{effi:a-2}
\eta_{\rho,K}	\le \Lambda_K^{1/2} \big\| \hat \brho_{K} - \tilde \brho_K  \big\|_K \le
C \sum_{F \in \cE_K} 
	\left( \dfrac{ \Lambda_F^-   \Lambda_F^+ }{ \Lambda_F^- + \Lambda_F^+} \right)^{1/2}
	h_F^{1/2}\big \| \jump{ \tilde \brho_\cT \cdot \bt_F} \big\|_{0,F}.
\eeq
Now,  (\ref{effi:grad}) is a direct consequence of 
(\ref{effi:a-2}) and the following efficiency bound for the jump of tangential derivative on edges
\[
	\left( \dfrac{ \Lambda_F^-   \Lambda_F^+ }{ \Lambda_F^- + \Lambda_F^+} \right)^{1/2}
	h_F^{1/2}
	\big \| \jump{ \tilde \brho \cdot \bt_F} \big\|_{0,F} \le C  \,
	\big\|A^{1/2} \nabla (u-u_{_\cT})\big\|_{0,\o_F}
\]
for all $F \in \cE_I $.
This completes the proof of the lemma.
\end{proof}

\section{Numerical Result}
\setcounter{equation}{0}

In this section, we report numerical results on two test problems. The first one is on the 
Crouziex-Raviart nonconforming finite element approximation to 
the Kellogg benchmark problem \cite{Kel:74}. This is an interface problem in (\ref{pde}) with
$\O=(-1,1)^2$, $\Gamma_N=\emptyset$, $f=0$, 
 \[
 	A(x)=\left\{\begin{array}{ll}
 	161.4476387975881, & \quad\mbox{in }\, (0,1)^2\cup (-1,0)^2,\\[2mm]
 	1, & \quad\mbox{in }\,\O\setminus ([0,1]^2\cup [-1, 0]^2),
	 \end{array}\right.
 \]
 and the exact solution in the polar coordinates is given by 
$u(r,\theta)=r^{0.1}\mu(\theta)$, where $\mu(\theta)$ is a smooth function of $\theta$.

Starting with a coarse mesh, Figure \ref{mesh-kellogg} depicts the mesh when the relative error is less than $10\%$.
Here the relative error is defined as the ratio between the energy norm of the true error and the energy norm of the 
exact solution. Clearly, the mesh is centered around the singularity (the origin) and there is no over-refinement along interfaces. 
Figure \ref{error-kellogg} is the log-log plot of the energy norm of the true error and the global error estimator $\eta$ 
versus the total number of degrees of freedom. It can be observed that the error converges in an optimal order 
(very close to $-1/2$) and that the 
efficiency index, i.e., 
\[
\dfrac{\eta}{\|A^{1/2} \nabla_h (u-u_{_\cT})\|} 
\]
is close to one when the mesh is fine enough.
\begin{figure}[hb]
        \centering
    \begin{minipage}[htb]{0.48\linewidth}
        \includegraphics[trim=20 15 0 10,clip=true,width=1\textwidth,angle=0]{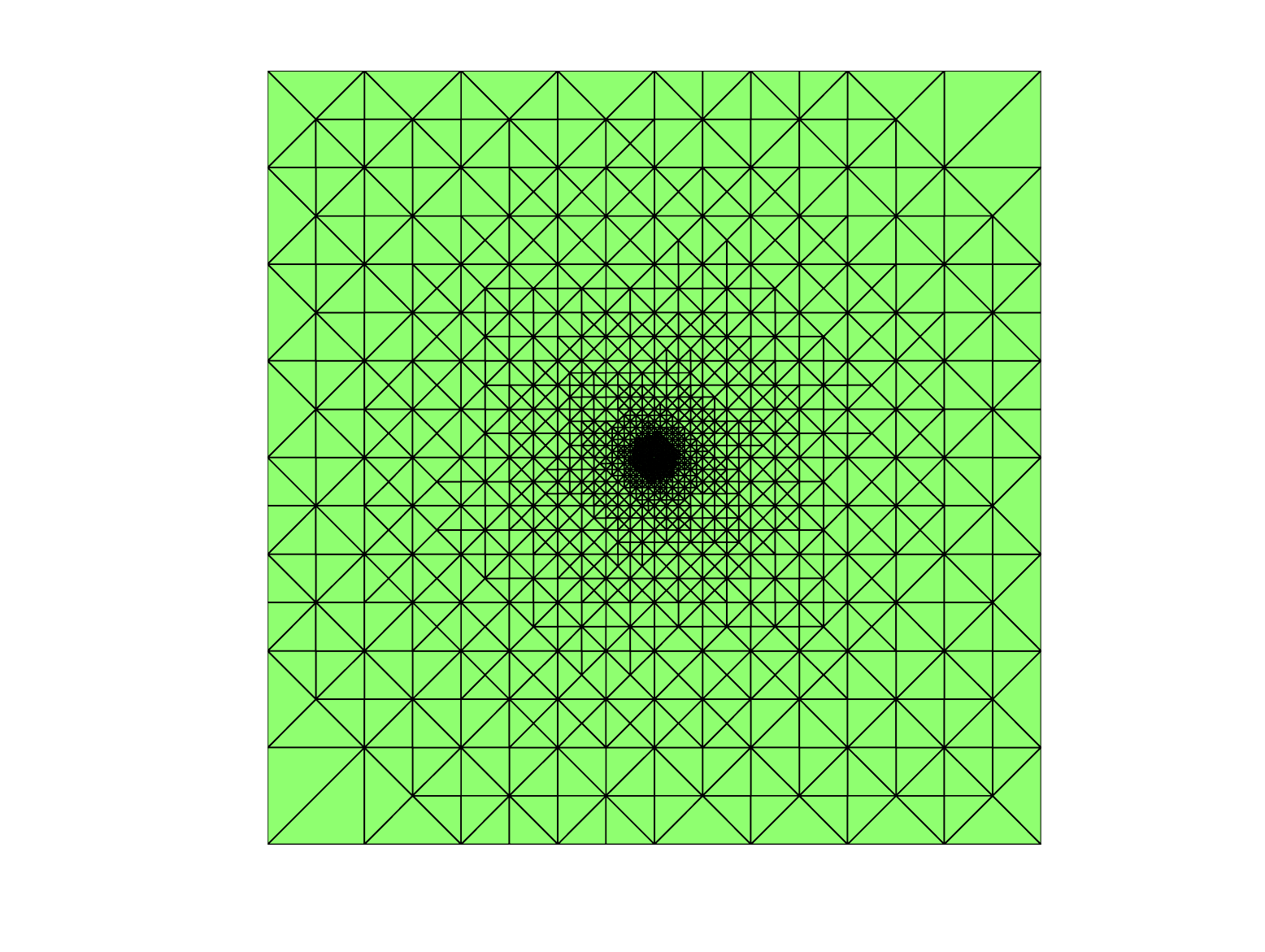}
        \vspace{-0.7cm}
        \caption{Kellogg problem: final mesh.}
    	\label{mesh-kellogg}
        \end{minipage}%
    \hspace{0.02\linewidth}
     \begin{minipage}[htb]{0.48\linewidth}
      \includegraphics[width=1\textwidth,angle=0]{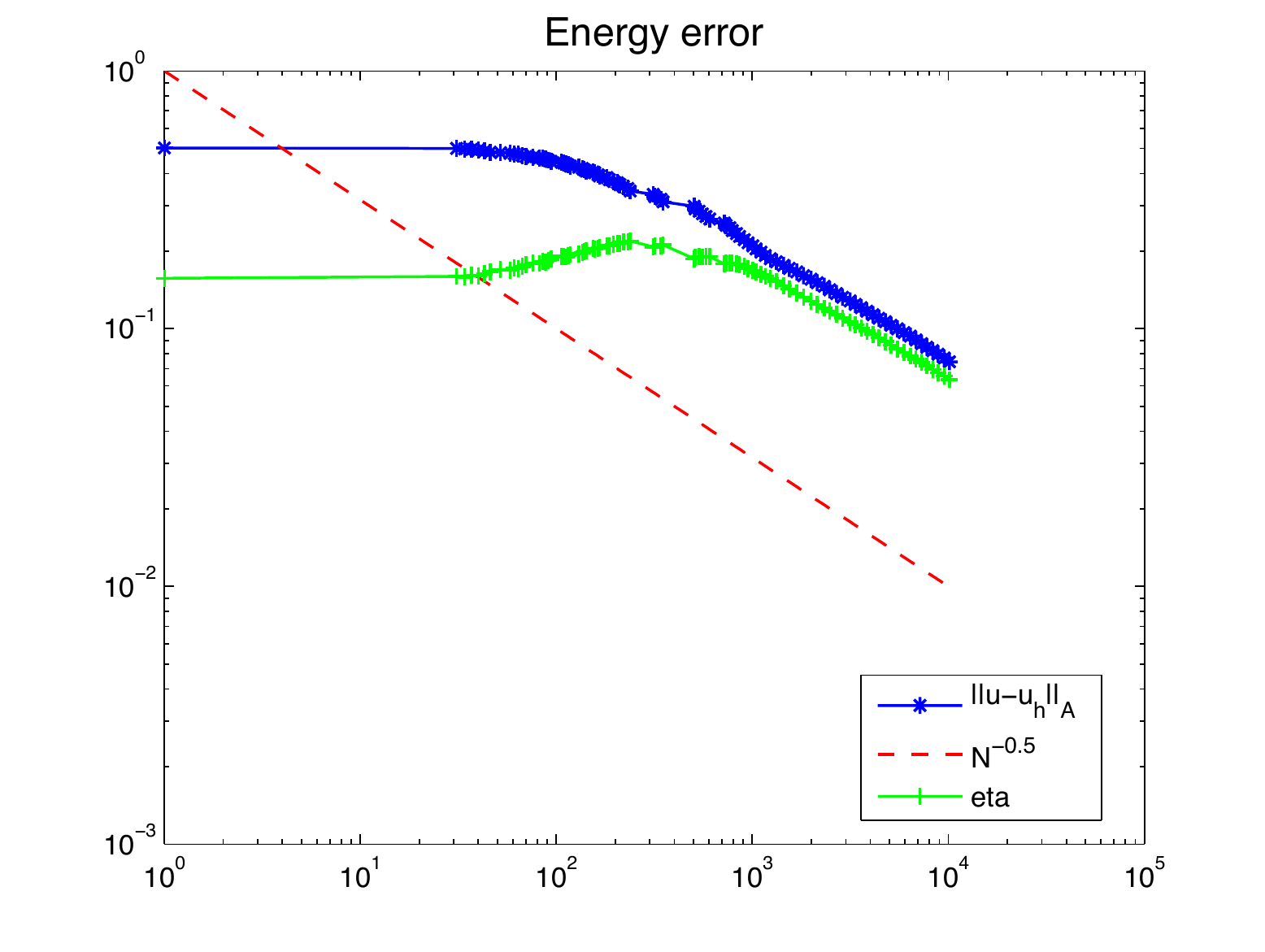}
        \vspace{-0.7cm}
        \caption{Error comparison.}
        \label{error-kellogg}
        \end{minipage}%
\end{figure}

With $f=0$ for the Kellogg problem, we note that $\eta_\sigma=0$, therefore, $\eta=\eta_{\rho}$.  Even though for the nonconforming error we recover a gradient that is not curl free, (thus we were not be able to prove that the reliability constant is $1$ for the nonconforming error) the numerics still shows the behavior of asymptotic exactness, i.e., when the mesh is fine enough the efficiency index is close to $1$.

For the second test problem, we consider a Poisson L-shaped problem that has a nonzero conforming error $\eta_\sigma$.
On the L-shaped domain $\O=[-1, \, 1]^2\setminus [0,\, 1]\times [-1,\,0]$, the Poisson problem ($A=I$) has 
the following exact solution
\[
	u(r,\theta) = r^{2/3} \sin((2 \theta+\pi)/3) + r^2/2.
\]
The numerics is based on the Crouziex-Raviart finite element approximation.
With the relative error being less than $0.75\%$, the final mesh generated the adaptive 
mesh refinement algorithm is depicted in Figure \ref{mesh-lshape}. Clearly, the mesh is relatively centered around the singularity (origin). 
Comparison of the true error and the estimator is presented in Figure \ref{error-lshape}. 
It is obvious that the error converges in an optimal order (very close to $-1/2$) and that the 
efficiency index is very close to $1$ for all iterations.

\begin{figure}[hb]
        \centering
    \begin{minipage}[htb]{0.48\linewidth}
        \includegraphics[trim=20 15 0 10,clip=true,width=1\textwidth,angle=0]{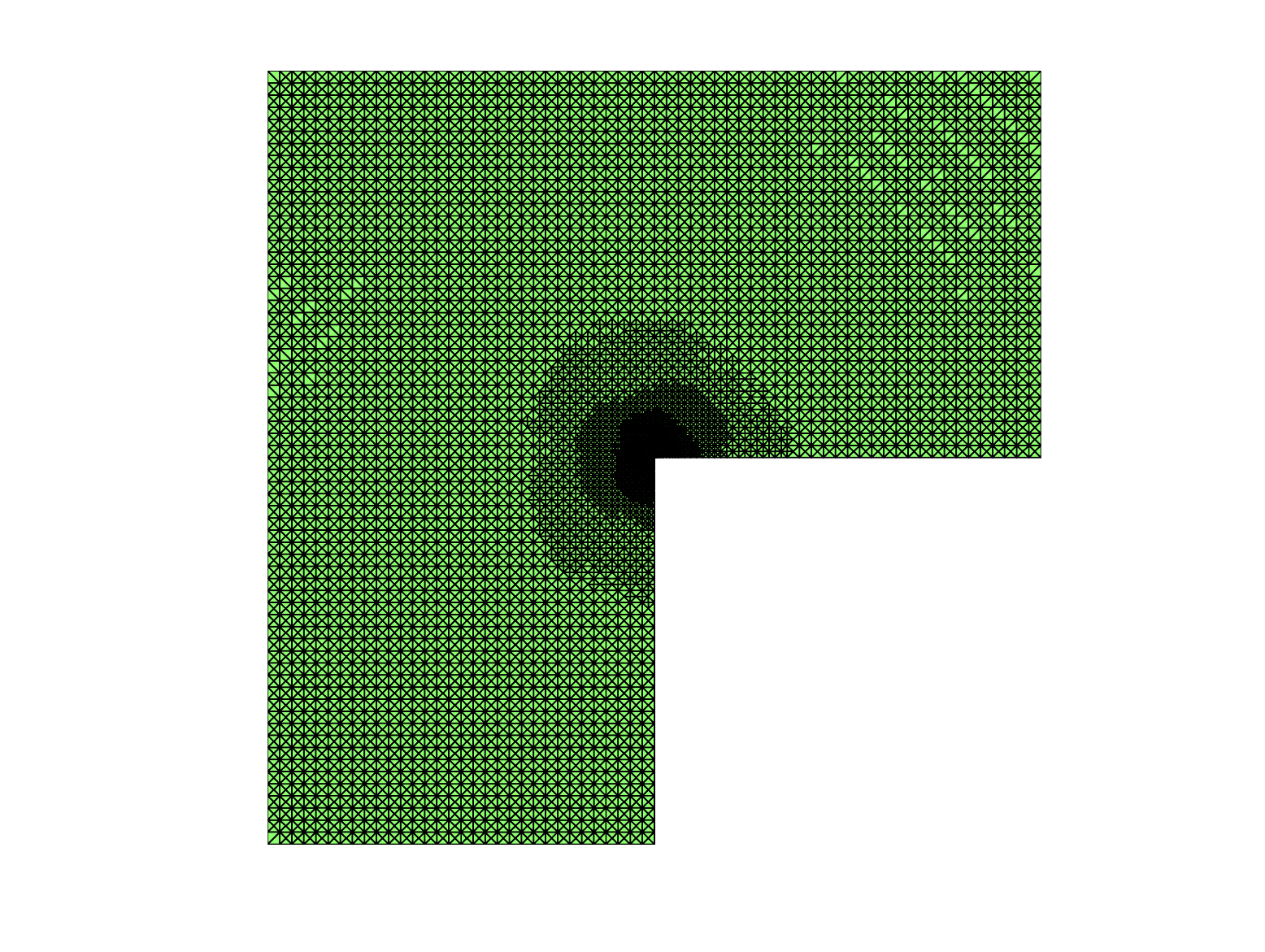}
        \vspace{-0.7cm}
        \caption{L-shape problem: final mesh.}
    	\label{mesh-lshape}
        \end{minipage}%
    \hspace{0.02\linewidth}
     \begin{minipage}[htb]{0.48\linewidth}
     \includegraphics[width=1\textwidth,angle=0]{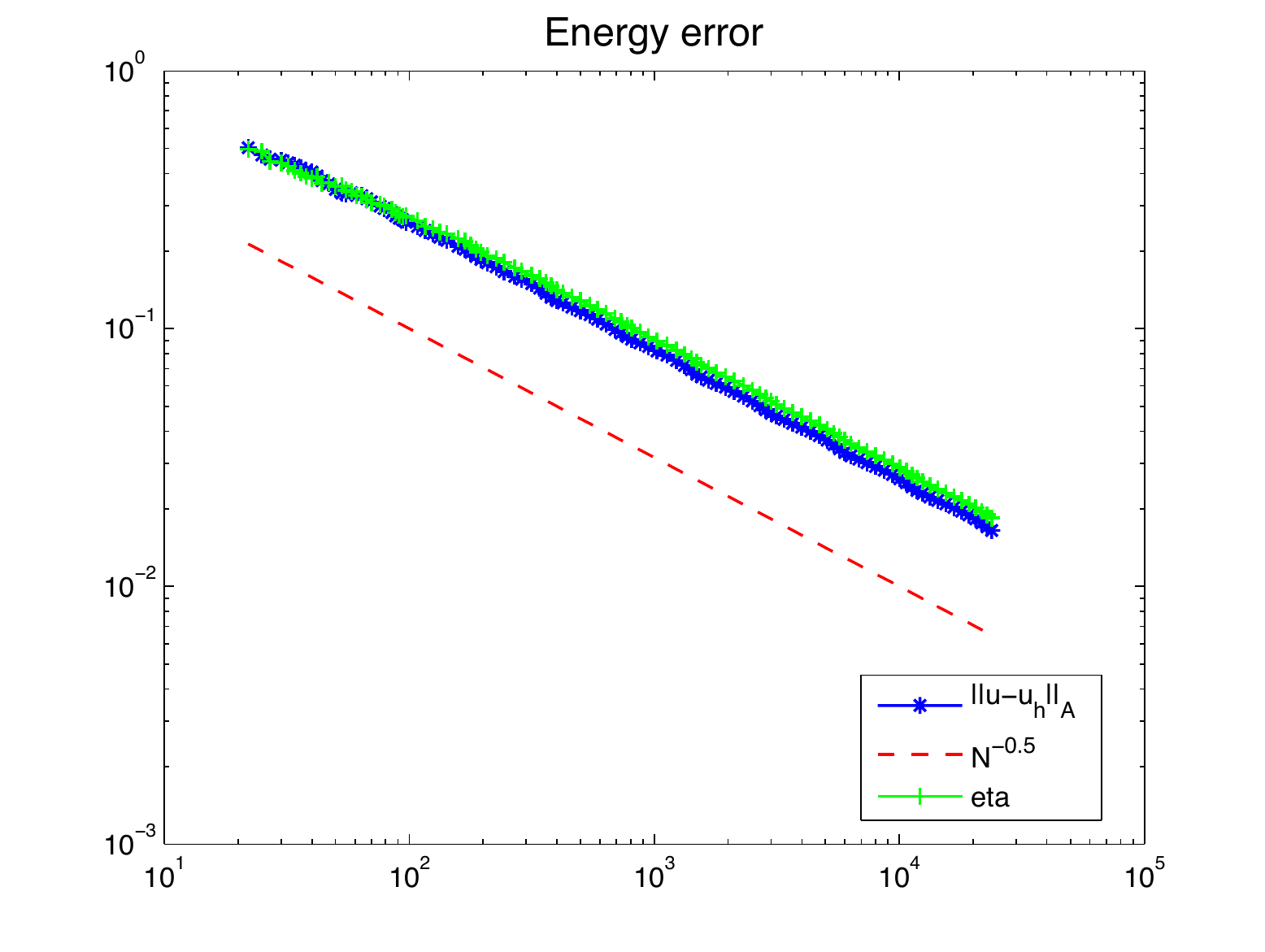}
        \vspace{-0.7cm}
        \caption{Error comparison.}
        \label{error-lshape}
        \end{minipage}%
\end{figure}

\end{document}